\newtheorem{theorem}{Theorem}[section]
\newtheorem{lemma}[theorem]{Lemma}
\newtheorem{proposition}[theorem]{Proposition}
\newtheorem{definition}[theorem]{Definition}
\newtheorem{remark}[theorem]{Remark}
\newtheorem{example}[theorem]{Example}
\numberwithin{equation}{section}
\def\to{\rightarrow}
\def\f{\mathfrak}
\def\m{\mathbb}
\def\c{\mathcal}
\def\b{\mathbf}
\def\r{\mathrm}
\def\ot{\otimes}
\def\i{\mathrm{i}}
\begin{document}
\baselineskip15pt
\title[Quantum groups of maps]{On the quantum groups and semigroups of maps between noncommutative spaces}
\author[M. M. Sadr]{Maysam Maysami Sadr}
\address{Department of Mathematics\\
Institute for Advanced Studies in Basic Sciences \\
P.O. Box 45195-1159, Zanjan 45137-66731, Iran}
\email{sadr@iasbs.ac.ir}
\subjclass[2010]{16T05, 16T10, 16T20, 58B34.}
\keywords{Hopf-algebra, bialgebra, quantum group, noncommutative geometry.}
\begin{abstract}
We define algebraic families of (all) morphisms which are purely algebraic analogs of
quantum families of (all) maps introduced by P.M. So{\l}tan. Also, algebraic families of (all) isomorphisms are introduced.
By using these notions we construct two classes of Hopf-algebras which may be interpreted as the quantum group
of all maps from a finite space to a quantum group, and the quantum group of all automorphisms of a finite NC space.
As special cases three classes of NC objects are introduced:
quantum group of gauge transformations, Pontryagin dual of a quantum group, and Galois-Hopf-algebra of an algebra extension.
\end{abstract}
\maketitle
\section{Introduction}
Our work is based on the extension to the noncommutative setting of the following picture.
Let $X$ and $Y$ be compact Hausdorff spaces. If $S$ is a Hausdorff space then there is a canonical one-to-one correspondence
between continuous maps $f:S\times X\to Y$ and continuous families $\bar{f}=\{f(s,?)\}_{s\in S}$ of continuous maps from $X$ to $Y$
with parameter space $S$. Moreover, by Exponential Law of Topology, there is a canonical homeomorphism $Y^{S\times X}\simeq (Y^X)^S$
where the mapping spaces are endowed with compact-open topology.
Indeed, the family of all continuous maps from $X$ to $Y$, that is the family $\bar{e}$ induced by the evaluation map
$e:Y^X\times X\to Y$ defined by $e(a,x)=a(x)$, and the space $Y^X$ are completely characterized by the following universal property.
\begin{quote}
For every Hausdorff space $S$ and any continuous map $f:S\times X\to Y$ there is a unique continuous map
$\bar{f}$ that makes the following diagram commutative.
$$\xymatrix{Y^X\times X\ar[r]^-{e}& Y\\
S\times X\ar[u]^-{\bar{f}\times\r{id}_X}\ar[-1,1]_{f}&}$$
\end{quote}
In the context of NC Geometry and Operator Algebra
P.M. So{\l}tan \cite{Soltan1} and S.L. Woronowicz \cite{Woronowicz1} have introduced the notion of
\emph{quantum space of all maps} between two C*-algebraic NC spaces.
To define this notion they used the C*-dual of the above universal property: For two quantum spaces $\f{Q}A$ and $\f{Q}B$,
a quantum space $\f{Q}C$ together with a *-homomorphism $\phi:B\to A\ot C$ is called the quantum family of all maps from $\f{Q}A$ to $\f{Q}B$
if for every *-homomorphism $\psi:B\to A\ot D$, between C*-algebras, there is a unique *-homomorphism $\bar{\psi}:C\to D$ satisfying
$(\r{id}_A\ot\bar{\psi})\phi=\psi$. So{\l}tan \cite{Soltan1} has showed that such universal C*-algebra $C$ and *-homomorphism $\phi$ exist when
$\f{Q}A$ is a finite quantum space i.e. $A$ is a finite dimensional C*-algebra. He also showed (\cite{Soltan1}, \cite{Soltan2})
that if $B=A$ then $C$ has a canonical C*-bialgebra
structure (as in the classical case the space $X^X$ is a topological semigroup) and the bialgebra structure of some classes of quantum groups,
e.g. quantum permutation groups (\cite{Wang1}, \cite{BanicaBichonCollins1}) and quantum isometry groups (\cite{Banica1}), are the same as of $C$.

The aim of this paper is to study the purely algebraic version of the quantum space of all maps and its bialgebra structures.
It seems that our algebraic formalism is more interesting and useful than C*-formalism because the class of
finite dimensional C*-algebras restricts only to finite direct sums of full matrix algebras over the complex field.

In Section 2 we define \emph{algebraic family of all morphisms} from an algebra $B$ to another algebra $A$
over a commutative ring $\m{K}$. This will be a morphism $\f{m}(B,A):B\to A\ot\f{M}(B,A)$ between algebras which satisfies a universal property
analogous to the above. As we mentioned above, this is a purely algebraic reformulation of the notion of quantum family of all maps
\cite{Soltan1}. Analogue of Theorem 3.3 of \cite{Soltan1}, we show that $\f{m}(B,A)$ exists if $A$ is a free finite
rank $\m{K}$-module. Also we consider some elementary properties of $\f{M}(?,A)$ as a functor on the category of algebras.

In Section 3 we define \emph{algebraic families of all isomorphisms}. The idea of the definition follows from the following
trivial fact: A family $\bar{f}$ associated to the continuous map $f:S\times X\to Y$ is a family of homeomorphisms from $X$ onto $Y$ if and only if
there is another family $\bar{g}$ associated to a continuous map $g:S\times Y\to X$ such that $g(s,f(s,x))=x$ and $f(s,g(s,y))=y$ for every
$x\in X$, $y\in Y$ and $s\in S$. It follows that the family of all homeomorphisms must be universal with respect to this property. Then by dualizing
this universal property we find the desired notion of the algebraic family of all isomorphisms $\f{i}(B,A):B\to A\ot\f{I}(B,A)$ from $B$ onto $A$.
We will show that $\f{i}(B,A)$ exists if both $A$ and $B$ are free finite rank $\m{K}$-modules.

In Section 4 we consider existence of dual of the following classical objects:
\begin{enumerate}
\item[(1)] The space of all bundle morphisms between two fibre bundles over a classical space.
\item[(2)] The space of all continuous maps between two classical spaces which are identity over a common subspace.
\item[(3)] The space of all continuous group homomorphisms between two topological groups.
\item[(4)] The space of all $G$-maps between two $G$-spaces for a topological group $G$.
\end{enumerate}

If $G$ is a compact group then $G^X$ is a topological group for every compact space $X$. Section 5 is devoted to the noncommutative
analog of this fact. More precisely, we will show that if $B$ is a Hopf-algebra and $A$ is commutative and finite-rank then $\f{M}(B,A)$
has a canonical Hopf-algebra structure. In the C*-case this construction has been considered in \cite{Soltan3} and \cite{Sadr1}.

In Sections 6 and 7, by using the results of the preceding sections, we construct two classes of Hopf-algebras which may be interpreted
as the quantum group of gauge transformations over trivial quantum vector bundles in the sense of \cite{BrzezinskiMajid1} and, as the
Pontryagin dual of finite-rank Hopf-algebras.

If $X$ is a compact Hausdorff space then the space of all homeomorphisms of $X$ is a topological group. In Section 8 we show
this is also the case in the dual context. Indeed, we will show that for any finite-rank Hopf-algebra $A$ the algebra $\f{I}(A,A)$ has
a canonical Hopf-algebra structure. Finally, we consider a notion dual to the notion of Galois group of algebra extensions.

We announce that there is a C*-algebraic concept of \emph{family of quantum invertible maps} due to
Podle\'{s} \cite{Podles1}, \cite{SkalskiSoltan1} which is different from our notion of algebraic family of isomorphisms
and because of its topological nature it seems that its purely algebraic formulation is not interesting. Also, after preparation of this paper
we received a recent paper \cite{SkalskiSoltan1} of Skalski and So{\l}tan where they consider C*-algebraic quantum group
structures on Podle\'{s}' families of quantum invertible maps. We delegate study of relations between the results of the present paper
and those of \cite{SkalskiSoltan1} to a future work.

\textbf{Notations \& Preliminaries.}
The identity map on a set $X$ is denoted by $\i_X$.
Throughout all rings have unit and ring homomorphisms preserve units. $\m{K}$ denotes
a fixed non-zero commutative ring. $\ot$ denotes tensor product over $\m{K}$. For any permutation $\sigma$ of $\{1,\cdots,n\}$,
$F_\sigma$ denotes the flip homomorphism $x_1\ot\cdots\ot x_n\mapsto x_{\sigma(1)}\ot\cdots\ot x_{\sigma(n)}$
on a tensor product $M_1\ot\cdots\ot M_n$ of $\m{K}$-modules; when there is no risk of confusion we will write $F$ instead of $F_\sigma$.
An algebra is a ring together with a structural ring homomorphism $\m{K}\to A$.
A morphism between algebras is a ring homomorphism which commutes with structural homomorphisms.
$\m{K}[x_i]_{i\in I}$ and $\m{K}^\r{c}[x_i]_{i\in I}$ denote, respectively, the polynomial algebra in non-commuting and commuting
variables $\{x_i\}_{i\in I}$. Let $B$ be an algebra. If $S\subseteq B$ generates $B$ as an algebra, we write
$I_S$ for kernel of the canonical morphism $\m{K}[x_s]_{s\in S}\to B$ defined by $x_s\mapsto s$.
$\b{Alg}$ denotes the category of algebras and $\b{Alg}(B,A)$ is the set of morphisms from $B$ to $A$.
The elements of $\b{Alg}(B,\m{K})$ are called characters of $B$. The full subcategory of commutative algebras is denoted by $\b{Alg}^\r{c}$.
The multiplication of $A$ is denoted by $\mu_A:A\ot A\to A$. Note that if $A$ is commutative then $\mu_A$ is a morphism.
By a finite-rank algebra we mean an algebra which is a free $\m{K}$-module of finite rank.
Throughout \emph{finite-rank} is abbreviated to \emph{FR}. A comultiplication on an algebra $B$ is a
morphism $\Delta:B\to B\ot B$ which is coassociative: $(\Delta\ot\i_B)\Delta=(\i_B\ot\Delta)\Delta$. $\Delta$ is cocommutative if
$\Delta=F\Delta$. A character $\epsilon$ on $B$ is a
counit if $(\i_B\ot\epsilon)\Delta=(\epsilon\ot\i_B)\Delta=\i_B$. A morphism $S:B\to B^\r{op}$ is an antipode if
$\mu_B(S\ot\i_B)\Delta=\mu_B(\i_B\ot S)\Delta=\epsilon1$. Then $(B,\Delta,\epsilon)$ and $(B,\Delta,\epsilon,S)$
are called bialgebra and Hopf-algebra, respectively.
By a (right) $B$-comodule we mean an algebra $V$ together with a right coaction of $B$, i.e. a morphism
$\rho:V\to V\ot B$ satisfying $(\rho\ot\i_B)\rho=(\i_V\ot\Delta)\rho$ and $(\i_V\ot\epsilon)\rho=\i_V$.
\section{Algebraic family of all morphisms}
In this section we consider the notions of \emph{algebraic family of morphisms} and \emph{algebraic family of all morphisms} from an algebra $B$
to another algebra $A$. We shall show that the latter exists when $A$ is a FR algebra. Also we consider some functorial
properties of these notions and some simple explicit examples and computations.
\begin{definition}\label{D2.1}
Let $A$, $B$, $B'$, $C$ and $C'$ be algebras.
\begin{enumerate}
\item[(a)] A morphism $\psi:B\to A\ot C$ is called an algebraic family of morphisms from $B$ to $A$ with parameter-algebra $C$.
When there are no ambiguity about $A$, $B$ and $C$, we call $\psi$ just 'family'.
\item[(b)] A family with parameter-algebra $0$ is called empty family of morphisms.
If $f_1,\cdots,f_n$ is a finite collection of morphisms from $B$ to $A$ then $\psi:B\to A\ot\m{K}^n$,
defined by $\psi(b)=\sum_{i=1}^nf_i(b)\ot e_i$ where $e_1,\cdots,e_n$ denote standard basis, is called trivial family.
\item[(c)] $\c{M}(B,A)$ denotes the class of all families of morphisms from $B$ to $A$. $\c{M}(B,A)$ can be considered as a
category such that objects are families and morphisms from $\psi:B\to A\ot C$ to $\psi':B\to A\ot C'$ are algebra morphisms
$\varphi:C\to C'$ satisfying $\psi'=(\i_A\ot\varphi)\psi$.
\item[(d)] Let $\psi:B\to A\ot C$ and $\psi':B'\to B\ot C'$ be families of morphisms from $B$ to $A$ and from $B'$ to $B$, respectively.
Then the family $(\psi\ot\i_{C'})\psi'$ of morphisms from $B'$ to $A$ (with parameter algebra $C\ot C'$)
is denoted by $\psi\circ\psi'$ and called composition of $\psi$ and $\psi'$.
(Note that composition of families can be considered as a functor from $\c{M}(B,A)\times\c{M}({B'},B)$ to $\c{M}({B'},A)$.)
\item[(e)] Let $\c{C}$ be a subclass of $\c{M}(B,A)$. $\psi\in\c{C}$ is called a $\c{C}$-universal family if it is an initial object of
$\c{C}$ when $\c{C}$ is considered as a full subcategory of $\c{M}(B,A)$.
(Note that if a $\c{C}$-universal family exists then it is unique up to isomorphism.)
\item[(f)] Let $\c{C}$ be a subclass of $\c{M}(B,A)$. Then $\c{C}^\r{c}$ denotes the subclass of $\c{C}$
containing families with commutative parameter-algebras.
\end{enumerate}
\end{definition}
\begin{lemma}\label{L2.0}
Let $\c{C}\subseteq\c{M}(B,A)$. Suppose that $\phi:B\to A\ot C$ is a $\c{C}$-universal family.
Consider the morphism $\psi=(\i_A\ot\varphi)\phi:B\to A\ot C/[C,C]$, where $[C,C]$ denotes commutator ideal of $C$ and
$\varphi:C\to C/[C,C]$ is the canonical projection. Then $\psi$ is $\c{C}^\r{c}$-universal if it belongs to $\c{C}$.
\end{lemma}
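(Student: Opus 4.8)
The plan is to verify directly that $\psi$ is an initial object of $\c{C}^\r{c}$, viewed as a full subcategory of $\c{M}(B,A)$ in the sense of Definition \ref{D2.1}(e). First I would observe that, since $C/[C,C]$ is commutative, the hypothesis $\psi\in\c{C}$ already places $\psi$ in $\c{C}^\r{c}$, so it is a legitimate object of the relevant category; it then remains to produce, for every $\psi'\in\c{C}^\r{c}$, a unique morphism from $\psi$ to $\psi'$ in $\c{M}(B,A)$.

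For the existence half, I would fix $\psi':B\to A\ot D$ in $\c{C}^\r{c}$, so $D$ is commutative. Since $\psi'\in\c{C}$ and $\phi$ is $\c{C}$-universal, there is a unique morphism $\eta:C\to D$ with $\psi'=(\i_A\ot\eta)\phi$. Because $D$ is commutative, $\eta$ annihilates the commutator ideal $[C,C]$, hence by the universal property of the abelianization it factors as $\eta=\chi\varphi$ for a unique morphism $\chi:C/[C,C]\to D$. Then
$$(\i_A\ot\chi)\psi=(\i_A\ot\chi)(\i_A\ot\varphi)\phi=(\i_A\ot\eta)\phi=\psi',$$
so $\chi$ is a morphism from $\psi$ to $\psi'$ in $\c{M}(B,A)$.

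For uniqueness, suppose $\chi_1,\chi_2:C/[C,C]\to D$ both satisfy $(\i_A\ot\chi_j)\psi=\psi'$. Composing each with $\varphi$ shows that $\chi_1\varphi$ and $\chi_2\varphi$ are morphisms from $\phi$ to $\psi'$ in $\c{M}(B,A)$, so the uniqueness clause in the $\c{C}$-universality of $\phi$ forces $\chi_1\varphi=\chi_2\varphi$; since $\varphi$ is surjective, $\chi_1=\chi_2$. This establishes that $\psi$ is $\c{C}^\r{c}$-universal.

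I do not expect a serious obstacle here: the statement is a formal consequence of the interaction between two universal properties — that of $\phi$ within $\c{C}$ and that of the projection $C\to C/[C,C]$ among commutative quotients of $C$ — together with surjectivity of $\varphi$ for the uniqueness half. The only points needing a little care are bookkeeping ones: checking that all maps in sight are algebra morphisms compatible with the structural homomorphisms $\m{K}\to{-}$, and noting that the hypothesis ``$\psi\in\c{C}$'' is precisely what is required so that ``$\psi$ is an initial object of $\c{C}^\r{c}$'' is a meaningful assertion.
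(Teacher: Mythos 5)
Your argument is correct and is precisely the routine verification the paper leaves as ``straightforward'': existence follows by factoring the canonical morphism $\eta:C\to D$ through the abelianization $C/[C,C]$ (since $D$ is commutative), and uniqueness follows from the uniqueness clause of the $\c{C}$-universality of $\phi$ together with surjectivity of $\varphi$. Nothing is missing, and there is no divergence from the intended proof.
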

\begin{proof}
Straightforward.
\end{proof}
\begin{proposition}\label{P2.1}
Let $A$ and $B$ be algebras such that $A$ is FR. Then there exists a $\c{M}(B,A)$-universal family, say
$\phi:B\to A\ot Z$. This family has the following additional properties.
\begin{enumerate}
\item[(a)] The set $\{(\alpha\ot\i_Z)\phi(b)\}$, where $b\in B$
and $\alpha$ runs over all module homomorphisms from $A$ to $\m{K}$, generates $Z$ as an algebra.
\item[(b)] If $B$ is finitely generated (resp. presented) algebra then $Z$  is finitely generated (resp. presented) algebra.
\item[(c)] For any algebra $C$ there is a canonical
one-to-one correspondence between morphisms from $B$ to $A\ot C$ and morphisms from $Z$  to $C$.
In particular there is a one-to-one correspondence between morphisms from $B$ to $A$ and characters on $Z$.
\item[(d)] $Z$ represents the covariant functor $\b{Alg}(B,A\ot?)$ from $\b{Alg}$ to the category of sets.
\end{enumerate}
\end{proposition}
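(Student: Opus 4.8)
The plan is to construct $Z$ explicitly by a universal generators-and-relations presentation, mimicking So\l tan's construction but in the purely algebraic setting. Fix a basis $a_1,\dots,a_n$ of $A$ as a free $\m{K}$-module, with dual basis $\alpha_1,\dots,\alpha_n$ of module homomorphisms $A\to\m{K}$, and write the structure constants by $a_ia_j=\sum_k c_{ij}^k a_k$ and $1_A=\sum_k u^k a_k$. Choose a generating set $S$ of $B$ (for instance $S=B$ itself, to avoid smallness issues, or a finite one when $B$ is finitely generated). I would then let $Z$ be the quotient of the free algebra $\m{K}[z_{s,k}]_{s\in S,\,1\le k\le n}$ by the two-sided ideal generated by the relations forcing the map $s\mapsto\sum_k a_k\ot z_{s,k}$ to extend to a morphism $B\to A\ot Z$: concretely, for each polynomial relation $p\in I_S$ among the generators of $B$, impose the corresponding relation obtained by substituting $x_s\mapsto\sum_k a_k\ot z_{s,k}$ and expanding in the basis $\{a_k\}$ of $A$ (equating each $a_k$-component to zero), together with the unit relation $\sum_k u^k\,1_Z=1_Z$ handled automatically since morphisms preserve $1$. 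Define $\phi(s)=\sum_k a_k\ot z_{s,k}$; by construction this respects all relations of $B$, hence extends to a morphism $\phi:B\to A\ot Z$.

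Next I would verify the universal property. Given any family $\psi:B\to A\ot C$, write $\psi(s)=\sum_k a_k\ot c_{s,k}$ for uniquely determined $c_{s,k}\in C$ (using freeness of $A$: the map $A\ot C\to C^n$ splitting along the basis is an isomorphism of $\m{K}$-modules). Since $\psi$ is a morphism, the elements $c_{s,k}$ satisfy exactly the relations that were imposed in defining $Z$, so there is a well-defined algebra morphism $\bar\psi:Z\to C$ sending $z_{s,k}\mapsto c_{s,k}$. One checks $(\i_A\ot\bar\psi)\phi=\psi$ on the generators $s\in S$, hence everywhere, and uniqueness of $\bar\psi$ is forced because the images $\bar\psi(z_{s,k})$ are determined by $\psi$ via the basis decomposition. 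This shows $\phi$ is $\c{M}(B,A)$-universal, i.e.\ an initial object, giving the main claim.

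The four additional properties then follow quickly. Part (a): applying $\i_A\ot\varphi$ for $\varphi\colon Z\to Z/[\ldots]$ is irrelevant here --- rather, $(\alpha_k\ot\i_Z)\phi(s)=z_{s,k}$, and these generate $Z$ by construction; a general $\alpha\ot\i_Z$ is a $\m{K}$-combination of the $\alpha_k\ot\i_Z$, and a general $b\in B$ is a polynomial in the $s\in S$, so the stated set generates $Z$. Part (b): if $B$ is finitely generated we may take $S$ finite, so $Z$ has finitely many generators $z_{s,k}$; if moreover $B$ is finitely presented, $I_S$ is finitely generated, and each generating relation of $I_S$ contributes finitely many relations (its $n$ basis-components) to $Z$, so $Z$ is finitely presented. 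Part (c) is just the universal property restated as a natural bijection $\b{Alg}(B,A\ot C)\cong\b{Alg}(Z,C)$, with $C=\m{K}$ giving the statement about characters (note $A\ot\m{K}\cong A$). Part (d) is the same bijection read as a natural isomorphism of functors, naturality in $C$ being immediate from functoriality of $A\ot?$ and the explicit formula for $\bar\psi$.

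The main obstacle is purely bookkeeping: carefully packaging the ``apply a morphism to the defining relations of $B$ and extract basis-components'' step so that it is manifestly both necessary (any family forces these relations on its coordinate elements) and sufficient (imposing them produces a genuine morphism out of $B$, not merely out of the free algebra on $S$). This requires being a little careful that the relations one writes down in $Z$ are independent of the chosen presentation of $B$ only up to the expected isomorphism of $Z$, and that the correspondence $\psi\leftrightarrow(c_{s,k})$ genuinely uses freeness of $A$ --- which is exactly where the FR hypothesis enters and cannot be dropped. None of this is deep, but it is the part that has to be done cleanly; everything else is formal.
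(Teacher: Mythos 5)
Your construction is exactly the paper's: present $Z$ by generators $z_{s,k}$ (indexed by a generating set $S$ of $B$ and a basis of $A$) subject to the basis-component relations obtained from $I_S$, define $\phi(s)=\sum_k a_k\ot z_{s,k}$, and verify universality by decomposing any $\psi(s)$ along the basis of $A$, with (a)--(d) read off from this presentation. The proof is correct and takes essentially the same approach as the paper.
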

$\phi$ is called \emph{algebraic family of all morphisms from $B$ to $A$}. We will also use the symbols
$\f{m}(B,A)$ and $\f{M}(B,A)$ for $\phi$ and $Z$, respectively.
\begin{proof}
Suppose that $S\subseteq B$ generates $B$ as an algebra
and $\{a_i:i=1,\cdots,n\}$ is a basis for $A$ as free module. Let
$Z$ be the universal algebra generated by $z_{si}$ ($s\in S$) such that
for every polynomial $p\in I_S$ the equation $p(\{\sum_{i=1}^na_i\ot z_{si}\}_{s\in S})=0$ (in $A\ot Z$)
is satisfied. Then the assignment $s\mapsto\sum_{i=1}^na_i\ot z_{si}$ defines a morphism $\phi$.
Let $\psi:B\to A\ot C$ be a morphism and suppose that $c_{s,i}\in C$ is such that $\psi(s)=\sum_{i=1}^na_i\ot c_{si}$.
Thus the equation $p(\{\sum_{i=1}^na_i\ot c_{si}\}_{s\in S})=0$ is satisfied for every $p\in I_S$.
Then the universality of $Z$ shows that the assignment $z_{si}\mapsto c_{si}$ defines a morphism
$\bar{\psi}$ and this morphism is the only one whose satisfies $\psi=(\i_A\ot\bar{\psi})\phi$.
This shows that $\phi$ is a universal family in $\c{M}(B,A)$.
(a) and (b) immediately  follow from the construction of $Z$, and it is clear that the assignment $\psi\mapsto\bar{\psi}$ defines
the desired correspondence in (c). Also (d) trivially follows from (c).
\end{proof}
\begin{remark}\label{R2.1}
Let $A$ and $B$ be algebras such that $A$ is FR. It follows from Proposition \ref{P2.1} and Lemma \ref{L2.0}
that there exists a family  $\f{m}^\r{c}:B\to A\ot\f{M}^\r{c}(B,A)$ which is $\c{M}^\r{c}(B,A)$-universal. It also follows that
$\f{M}^\r{c}(B,A)$ is canonically isomorphic to the quotient of $\f{M}(B,A)$ by its commutator ideal, and analogs of conditions (a)-(d) of Proposition
\ref{P2.1} are satisfied for $\f{M}^\r{c}(B,A)$.
\end{remark}
The universality of families of all morphisms easily implies the following proposition.
\begin{proposition}\label{P2.2}
Let $A$ and $B$ be algebras such that $A$ is FR and commutative.
Then $\f{M}^\r{c}(B,A)$ and $\f{M}^\r{c}(B/[B,B],A)$ are isomorphic.
\end{proposition}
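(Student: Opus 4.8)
The plan is to run the whole argument off the universal properties recorded in Proposition \ref{P2.1} and Remark \ref{R2.1}, using the single elementary fact that every algebra morphism whose codomain is commutative kills the commutator ideal of its domain. Write $q\colon B\to B/[B,B]$ for the quotient map; note that since $A$ is FR, both $\f{M}^\r{c}(B,A)$ and $\f{M}^\r{c}(B/[B,B],A)$ exist, and by Remark \ref{R2.1} they are commutative. Because $A$ is commutative, $A\ot C$ is commutative for every commutative algebra $C$. First I would apply this to $C=\f{M}^\r{c}(B,A)$: the family $\f{m}^\r{c}(B,A)\colon B\to A\ot\f{M}^\r{c}(B,A)$ has commutative codomain, hence annihilates $[B,B]$ and factors uniquely as $\f{m}^\r{c}(B,A)=\tilde\psi q$ for some morphism $\tilde\psi\colon B/[B,B]\to A\ot\f{M}^\r{c}(B,A)$. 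As the parameter-algebra $\f{M}^\r{c}(B,A)$ is commutative, $\tilde\psi$ is an object of $\c{M}^\r{c}(B/[B,B],A)$, so the $\c{M}^\r{c}(B/[B,B],A)$-universality of $\f{m}^\r{c}(B/[B,B],A)$ yields a unique morphism $u\colon\f{M}^\r{c}(B/[B,B],A)\to\f{M}^\r{c}(B,A)$ with $(\i_A\ot u)\f{m}^\r{c}(B/[B,B],A)=\tilde\psi$.

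In the reverse direction I would observe that $\f{m}^\r{c}(B/[B,B],A)q\colon B\to A\ot\f{M}^\r{c}(B/[B,B],A)$ is a family whose parameter-algebra $\f{M}^\r{c}(B/[B,B],A)$ is commutative, hence an object of $\c{M}^\r{c}(B,A)$. The $\c{M}^\r{c}(B,A)$-universality of $\f{m}^\r{c}(B,A)$ then produces a unique morphism $v\colon\f{M}^\r{c}(B,A)\to\f{M}^\r{c}(B/[B,B],A)$ with $(\i_A\ot v)\f{m}^\r{c}(B,A)=\f{m}^\r{c}(B/[B,B],A)q$.

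It then remains to verify that $u$ and $v$ are mutually inverse, which is a routine diagram chase powered by the uniqueness clauses. One has $(\i_A\ot uv)\f{m}^\r{c}(B,A)=(\i_A\ot u)\f{m}^\r{c}(B/[B,B],A)q=\tilde\psi q=\f{m}^\r{c}(B,A)$, so $uv=\i_{\f{M}^\r{c}(B,A)}$ by uniqueness for $\f{m}^\r{c}(B,A)$. Similarly $(\i_A\ot vu)\f{m}^\r{c}(B/[B,B],A)q=(\i_A\ot v)\tilde\psi q=(\i_A\ot v)\f{m}^\r{c}(B,A)=\f{m}^\r{c}(B/[B,B],A)q$, and since $q$ is surjective this gives $(\i_A\ot vu)\f{m}^\r{c}(B/[B,B],A)=\f{m}^\r{c}(B/[B,B],A)$, whence $vu=\i_{\f{M}^\r{c}(B/[B,B],A)}$ by uniqueness for $\f{m}^\r{c}(B/[B,B],A)$. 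The only genuine content is the factorization through $q$; everything else is formal. Equivalently and more conceptually, one may simply note that $\f{M}^\r{c}(B,A)$ and $\f{M}^\r{c}(B/[B,B],A)$ represent the same functor $C\mapsto\b{Alg}(B,A\ot C)=\b{Alg}(B/[B,B],A\ot C)$ on $\b{Alg}^\r{c}$ and invoke the Yoneda lemma; I expect the mild bookkeeping around this factorization to be the only point requiring any care.
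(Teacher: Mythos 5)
Your argument is correct: the factorization of $\f{m}^\r{c}(B,A)$ through $B/[B,B]$ (valid because $A\ot C$ is commutative for commutative $C$) plus the two uniqueness clauses give mutually inverse morphisms, and this is exactly the universality argument the paper has in mind when it labels the proof ``straightforward.'' The Yoneda reformulation you mention is the same argument in representable-functor language, so nothing further is needed.
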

\begin{proof}
Straightforward.
\end{proof}
\begin{remark}\label{R2.2}
Let $A$ and $B$ be algebras such that $A$ is FR.
\begin{enumerate}
\item[(a)] It follows easily from the universal property that
$\f{M}(B,\m{K})=B$, $\f{M}^\r{c}(B,\m{K})=B/[B,B]$,
and $\f{M}(\m{K},A)=\f{M}^\r{c}(\m{K},A)=\m{K}$.
\item[(b)] From the construction of universal parameter-algebra
in the proof of Proposition \ref{P2.1} we see that $\f{M}(B,\m{K}^n)$ is the free product
$B\ast\cdots\ast B$ of $n$ copies of $B$. (Here free product means coproduct in $\b{Alg}$.)
Similarly, we have $\f{M}^\r{c}(B,\m{K}^n)=\ot_{i=1}^nB/[B,B]$ by Remark \ref{R2.1}.
It follows that $\f{M}(\m{K}[x],\m{K}^n)=\m{K}[x_1,\cdots,x_n]$ and $\f{M}^\r{c}(\m{K}[x],\m{K}^n)=\m{K}^\r{c}[x_1,\cdots,x_n]$. Also
if $\m{K}$ is an algebraically closed field and $B$ is the algebra of polynomial
functions on an affine variety $V$ then $\f{M}^\r{c}(B,\m{K}^n)$
is the algebra of polynomial functions on $V^n$.
\item[(c)] From Proposition \ref{P2.1} (c) it follows
that if there is a morphism from $B$ to $A$ then $Z:=\f{M}(B,A)\neq0$ and also $Z^\r{c}:=\f{M}^\r{c}(B,A)\neq0$.
Moreover, if $\m{K}$ is a field, it follows from Dedekind's lemma that $\dim_\m{K}Z^\r{c}$ (and so $\dim_\m{K}Z$) is not less than the cardinality
of the set of algebra morphisms from $B$ to $A$.
\item[(d)] Conversely, suppose that $Z^\r{c}\neq0$. So $Z^\r{c}$ has a maximal ideal.
If $\m{K}$ is an algebraically closed field and $B$ is finitely generated then Proposition \ref{P2.1} (b) implies that
$Z^\r{c}$ is finitely generated and therefore, it follows from Zariski's lemma that $Z^\r{c}$
has a character and so there exits an algebra morphism from $B$ to $A$.
\item[(e)] Suppose that $\m{K}$ is an algebraically closed field and $B$ is finitely generated. Moreover, suppose that
there are only finitely many morphisms from $B$ to $A$. Then $Z^\r{c}/J$ is isomorphic to the algebra $\m{K}^n$
where $J$ denotes the Jacobson radical and $n$ is the number of distinct morphisms from $B$ to $A$. To see this fact,
let $f_1,\cdots,f_n$ be the collection of all morphisms from $B$ to $A$ and let
$\psi:B\to A\ot\m{K}^n$ be its trivial family. Let $\bar{\psi}:Z^\r{c}\to\m{K}^n$ be the unique morphism which satisfies
$\psi=(\i_A\ot\bar{\psi})\f{m}^\r{c}$. From Proposition \ref{P2.1} (c) it follows that $Z^\r{c}$ has exactly $n$ distinct characters
and these are of the form $p_i\bar{\psi}$ ($i=1,\cdots,n$) where $p_i:\m{K}^n\to\m{K}$ denotes the canonical projection on the $i$'th component.
It follows that any maximal ideal of $Z^\r{c}$ is in the form of $\ker(p_i\bar{\psi})$ and so $\bar{\psi}$ induces an injective morphism
from $Z^\r{c}/J$ to $\m{K}^n$. From Dedekind's Lemma it follows that $\dim_\m{K}Z^\r{c}/J\geq n$. So $Z^\r{c}/J$ and $\m{K}^n$ are isomorphic.
\end{enumerate}
\end{remark}
Let $A$ be a FR algebra. Let $B_1$ and $B_2$ be two algebras and $f:B_1\to B_2$
be a morphism. Then the universality of $\f{M}(B_1,A)$ shows that there is a unique morphism $\f{M}(f,A)=\f{M}(f)$
such that makes the following diagram commutative.
$$\xymatrix{B_1\ar[d]_{f}\ar[r]^-{\f{m}(B_1,A)}& A\ot\f{M}(B_1,A)\ar[d]^{\i_A{\ot}\f{M}(f)}\\
B_2\ar[r]^-{\f{m}(B_2,A)}&A\ot \f{M}(B_2,A)}$$
Analogously, if $g:B_2\to B_3$ is another morphism we find morphisms $\f{M}(g):\f{M}(B_2,A)\to\f{M}(B_3,A)$ and
$\f{M}(gf):\f{M}(B_1,A)\to\f{M}(B_3,A)$. Then again universality of $\f{M}(B_1,A)$ implies $\f{M}(gf)=\f{M}(g)\f{M}(f)$.
Similarly, we have $\f{M}(\i_B)=\i_{\f{M}(B,A)}$ for any algebra $B$.
Analogous statements are also satisfied for $\f{M}^\r{c}$. So we have proved the theorem below.
\begin{theorem}\label{T2.1}
Let $A$ be a FR algebra. Then $\f{M}(?,A):\b{Alg}\to\b{Alg}$ (resp. $\f{M}^\r{c}(?,A):\b{Alg}\to\b{Alg}^\r{c}$)
is a covariant functor.
\end{theorem}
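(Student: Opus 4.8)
The plan is to promote the assignment $f\mapsto\f{M}(f)$ sketched just before the statement to a genuine functor, the whole point being two applications of the uniqueness clause in the universal property of Proposition \ref{P2.1} (which applies to every $B$ precisely because $A$ is FR). First I would pin down the action on morphisms. Given a morphism $f\colon B_1\to B_2$, the composite $\f{m}(B_2,A)\circ f\colon B_1\to A\ot\f{M}(B_2,A)$ is again a morphism, hence a family of morphisms from $B_1$ to $A$ with parameter-algebra $\f{M}(B_2,A)$; so by Proposition \ref{P2.1}(c) there is a \emph{unique} morphism $\f{M}(f)\colon\f{M}(B_1,A)\to\f{M}(B_2,A)$ with $(\i_A\ot\f{M}(f))\f{m}(B_1,A)=\f{m}(B_2,A)f$. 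This is exactly the commuting square displayed above the theorem, and it determines $\f{M}(f)$ unambiguously.

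Next I would verify the two functor axioms, each by exhibiting a morphism that solves the relevant uniqueness problem. For identities, $\f{m}(B,A)\circ\i_B=\f{m}(B,A)=(\i_A\ot\i_{\f{M}(B,A)})\f{m}(B,A)$, so $\i_{\f{M}(B,A)}$ satisfies the equation characterizing $\f{M}(\i_B)$, whence $\f{M}(\i_B)=\i_{\f{M}(B,A)}$. For composition, given $f\colon B_1\to B_2$ and $g\colon B_2\to B_3$ I would compute, using $(\i_A\ot\alpha)(\i_A\ot\beta)=\i_A\ot\alpha\beta$ and the two defining equations,
\begin{align*}
(\i_A\ot\f{M}(g)\f{M}(f))\f{m}(B_1,A)&=(\i_A\ot\f{M}(g))(\i_A\ot\f{M}(f))\f{m}(B_1,A)\\
&=(\i_A\ot\f{M}(g))\f{m}(B_2,A)f=\f{m}(B_3,A)(gf),
\end{align*}
so $\f{M}(g)\f{M}(f)$ solves the uniqueness problem defining $\f{M}(gf)$, giving $\f{M}(gf)=\f{M}(g)\f{M}(f)$. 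This shows $\f{M}(?,A)\colon\b{Alg}\to\b{Alg}$ is a covariant functor.

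For the commutative variant I would repeat the identical three steps with $\f{m}^\r{c}(?,A)$ and $\f{M}^\r{c}(?,A)$ in place of $\f{m}(?,A)$ and $\f{M}(?,A)$, invoking the $\c{M}^\r{c}(B,A)$-universal property recorded in Remark \ref{R2.1} instead of Proposition \ref{P2.1}(c). The codomain is $\b{Alg}^\r{c}$ rather than $\b{Alg}$ simply because, by its construction (Remark \ref{R2.1}), each $\f{M}^\r{c}(B,A)$ is a quotient of $\f{M}(B,A)$ by its commutator ideal, hence commutative. There is no genuine obstacle in any of this — the content is entirely the two invocations of uniqueness — so the only thing to be careful about is the bookkeeping: at each step checking that the morphism being fed into the universal property really is a family of morphisms from the correct source algebra with the correct parameter-algebra, so that Proposition \ref{P2.1}(c) (resp. Remark \ref{R2.1}) legitimately applies.
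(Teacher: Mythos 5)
Your proposal is correct and follows essentially the same route as the paper: define $\f{M}(f)$ by applying the universal property of $\f{m}(B_1,A)$ to the family $\f{m}(B_2,A)f$, then derive $\f{M}(\i_B)=\i_{\f{M}(B,A)}$ and $\f{M}(gf)=\f{M}(g)\f{M}(f)$ from the uniqueness clause, with the commutative case handled identically via Remark \ref{R2.1}. Nothing is missing.
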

Note that for a fixed algebra $B$, $\f{M}(B,?)$ and $\f{M}^\r{c}(B,?)$ can be considered as contravariant functors from category
of FR algebras to $\b{Alg}$ and $\b{Alg}^\r{c}$, respectively.
\begin{theorem}\label{T2.2}
Let $A$ be a FR commutative algebra. Then  $\f{M}^\r{c}(?,A)$ preserves tensor product.
\end{theorem}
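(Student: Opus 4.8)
The plan is to recognise both sides of the claimed isomorphism as representing objects of one and the same functor on $\b{Alg}^\r{c}$, and then invoke the Yoneda lemma. First I would record the representability statement behind $\f{M}^\r{c}$: combining Proposition \ref{P2.1}(c) with Remark \ref{R2.1}, for every algebra $B$ and every \emph{commutative} algebra $C$ there is a bijection, natural in $C$,
$$\b{Alg}(B,A\ot C)\;\cong\;\b{Alg}^\r{c}(\f{M}^\r{c}(B,A),C),$$
sending $\varphi\colon\f{M}^\r{c}(B,A)\to C$ to $(\i_A\ot\varphi)\f{m}^\r{c}$; that is, $\f{M}^\r{c}(B,A)$ represents the functor $\b{Alg}(B,A\ot ?)$ on the category of commutative algebras. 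Here I use that $A$ is FR, so that $\f{M}^\r{c}(B,A)$ exists for every $B$, in particular for $B_1$, $B_2$ and $B_1\ot B_2$.

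The key elementary point is this: if $D$ is a commutative algebra, then the canonical morphisms $\iota_j\colon B_j\to B_1\ot B_2$ ($j=1,2$) induce a bijection
$$\b{Alg}(B_1\ot B_2,D)\;\cong\;\b{Alg}(B_1,D)\times\b{Alg}(B_2,D),\qquad f\mapsto(f\iota_1,f\iota_2),$$
whose inverse sends $(g_1,g_2)$ to $\mu_D(g_1\ot g_2)\colon b_1\ot b_2\mapsto g_1(b_1)g_2(b_2)$. This is well defined and multiplicative precisely because $D$ is commutative, so that $g_1(B_1)$ and $g_2(B_2)$ commute; one then checks at once that the two assignments are mutually inverse, and both sides are plainly functorial in $D$. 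When $B_1,B_2$ are themselves commutative this is just the statement that $\ot$ is the coproduct in $\b{Alg}^\r{c}$, but the same argument works with arbitrary $B_j$ as long as the target is commutative.

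Now I would chain these bijections. Fix a commutative algebra $C$. Since $A$ is commutative, $A\ot C$ is commutative, so applying the key point with $D=A\ot C$, then the representability statement to $B_1$ and $B_2$, and finally the key point once more, now with $B_1,B_2$ replaced by the commutative algebras $\f{M}^\r{c}(B_1,A)$ and $\f{M}^\r{c}(B_2,A)$, gives bijections natural in $C$:
\begin{align*}
\b{Alg}(B_1\ot B_2,A\ot C)
&\cong\b{Alg}(B_1,A\ot C)\times\b{Alg}(B_2,A\ot C)\\
&\cong\b{Alg}^\r{c}(\f{M}^\r{c}(B_1,A),C)\times\b{Alg}^\r{c}(\f{M}^\r{c}(B_2,A),C)\\
&\cong\b{Alg}^\r{c}\big(\f{M}^\r{c}(B_1,A)\ot\f{M}^\r{c}(B_2,A),\,C\big).
\end{align*}
On the other hand, the representability statement applied to $B_1\ot B_2$ identifies $\b{Alg}(B_1\ot B_2,A\ot C)$ with $\b{Alg}^\r{c}(\f{M}^\r{c}(B_1\ot B_2,A),C)$, again naturally in $C$. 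Hence $\f{M}^\r{c}(B_1\ot B_2,A)$ and $\f{M}^\r{c}(B_1,A)\ot\f{M}^\r{c}(B_2,A)$ represent the same functor $\b{Alg}^\r{c}\to(\text{sets})$, so by the Yoneda lemma they are canonically isomorphic; tracing the bijections shows the isomorphism is natural in $B_1$ and $B_2$, which is the assertion that $\f{M}^\r{c}(?,A)$ preserves tensor products.

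The only genuine content is the commutativity-dependent key point in the second paragraph; everything else is bookkeeping with universal properties. The main thing to be careful about is that every bijection in the chain is natural in $C$ (and, for the naturality of the resulting isomorphism, also natural in $B_1,B_2$), so that Yoneda applies cleanly. One should also note that no finiteness or commutativity hypothesis on $B_1,B_2$ is needed, only that the common codomain $A\ot C$ is commutative, which holds because $A$ and $C$ are.
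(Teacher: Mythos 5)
Your argument is correct, but it takes a more abstract route than the paper. The paper proves the theorem by exhibiting the isomorphism explicitly: it uses the morphisms $\f{M}(\alpha_i):\f{M}^\r{c}(B_i,A)\to\f{M}^\r{c}(B_1\ot B_2,A)$ induced by the inclusions $\alpha_i:B_i\to B_1\ot B_2$ and the coproduct property of $\ot$ in $\b{Alg}^\r{c}$ to build $f:C_1\ot C_2\to D$, builds $g:D\to C_1\ot C_2$ from the universal property of $D$ applied to the family $(\mu_A\ot\i_{C_1\ot C_2})(\i_A\ot F\ot\i_{C_2})(\phi\ot\phi)$, and then checks $gf=\i$ and $fg=\i$ by two short universal-property verifications. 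You instead recognise both algebras as representing the same functor $C\mapsto\b{Alg}(B_1\ot B_2,A\ot C)$ on $\b{Alg}^\r{c}$ and conclude by Yoneda; the only substantive ingredient is your "key point" that algebra maps $B_1\ot B_2\to D$ into a commutative $D$ correspond bijectively to pairs of maps $(B_1\to D,\,B_2\to D)$, which is exactly where commutativity of $A\ot C$ (hence of $A$) enters, and which the paper uses implicitly through the coproduct structure and the construction of $g$. Your representability premise is indeed available in the paper (Proposition \ref{P2.1}(c)--(d) together with Remark \ref{R2.1}), and all the bijections you chain are natural in $C$, so the Yoneda step is legitimate. What your route buys is brevity and a transparent explanation of where commutativity is needed; what the paper's route buys is the explicit isomorphism and, in particular, the explicit map $g$, which is reused later (Remark \ref{R2.3} and the comultiplication $\bar\Delta$ in Section 5) even in situations where the full tensor-preservation statement is not available. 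If you wanted your proof to serve those later purposes, you should unwind the chain of bijections at the identity to record the concrete morphisms it produces.
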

\begin{proof}
Suppose that $B_1$ and $B_2$ are two algebras. Let $C_i=\f{M}^\r{c}(B_i,A)$ and $\phi_i=\f{m}^\r{c}(B_i,A)$ ($i=1,2$) and also let
$D=\f{M}^\r{c}(B_1\ot B_2,A)$ and $\psi=\f{m}^\r{c}(B_1\ot B_2,A)$. We must show that $C_1\ot C_2$ and $D$ are isomorphic.
Let $\alpha_i:B_i\to B_1\ot B_2$ be the structural morphisms, i.e. $\alpha_1(b_1)=b_1\ot1$ and $\alpha_2(b_2)=1\ot b_2$ ($b_1\in B_1,b_2\in B_2$).
Then we find morphisms $\f{M}(\alpha_i):C_i\to D$. The coproduct structure of $C_1\ot C_2$ (in $\b{Alg}^\r{c}$) induces a morphism
$f:C_1\ot C_2\to D$ such that $f\gamma_i=\f{M}(\alpha_i)$ where $\gamma_i:C_i\to C_1\ot C_2$ are structural morphisms.
Let $g:D\to C_1\ot C_2$ be the unique morphism that makes the following diagram commutative.
$$\xymatrix{B_1\ot B_2\ar[dd]^{\phi{\ot}\phi}\ar[rr]_-{\psi}&& A\ot D\ar[d]_{\i_A{\ot}g}\\
&& A\ot (C_1\ot C_2)\\
A\ot C_1\ot A\ot C_2\ar[rr]^-{\i_A\ot F\ot \i_{C_2}}&&(A\ot A)\ot (C_1\ot C_2)\ar[u]^{\mu_A{\ot} \i_{C_1{\ot} C_2}}}$$
Then it is easily checked that $gf\gamma_i=\gamma_i$. This fact together with
the coproduct universal property of $C_1\ot C_2$ implies $gf=\i_{(C_1\ot C_2)}$.
Also it is easily checked that $(\i_A\ot fg)\psi=\psi$. So by the universal property of $D$
we have $fg=\i_D$. This completes the proof.
\end{proof}
\begin{remark}\label{R2.3}
Let $B_1$, $B_2$ and $A$ be algebras such that $A$ is commutative and FR. Then,
there is a canonical morphism from $\f{M}(B_1\ot B_2,A)$ to $\f{M}(B_1,A)\ot\f{M}(B_2,A)$, analogous to $g$ in the proof of Theorem \ref{T2.2}.
\end{remark}
The following theorem states an \emph{exponential law} in our dual formalism.
\begin{theorem}\label{T2.3}
Let $A_1$, $A_2$ and $B$ be algebras such that $A_1$ and $A_2$ are FR. Then the algebras
$\f{M}(B,A_1\ot A_2)$ and $\f{M}(\f{M}(B,A_1),A_2)$ are isomorphic.
\end{theorem}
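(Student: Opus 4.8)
The plan is to prove the isomorphism by a universal-property (equivalently, representability) argument, exactly in the spirit of Proposition \ref{P2.1}(d) and of the proof of Theorem \ref{T2.2}. First note that since $A_1$ and $A_2$ are FR, so is $A_1\ot A_2$; hence all three algebras $\f{M}(B,A_1\ot A_2)$, $\f{M}(B,A_1)$ and $\f{M}(\f{M}(B,A_1),A_2)$ exist. For an arbitrary algebra $C$, Proposition \ref{P2.1}(d) gives a natural bijection $\b{Alg}\big(\f{M}(B,A_1\ot A_2),C\big)\cong\b{Alg}\big(B,(A_1\ot A_2)\ot C\big)$, while applying the same fact twice (using that $A_1$ is FR, with parameter algebra $A_2\ot C$) gives $\b{Alg}\big(\f{M}(\f{M}(B,A_1),A_2),C\big)\cong\b{Alg}\big(\f{M}(B,A_1),A_2\ot C\big)\cong\b{Alg}\big(B,A_1\ot(A_2\ot C)\big)$. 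Since the associativity isomorphism $(A_1\ot A_2)\ot C\cong A_1\ot(A_2\ot C)$ is natural in $C$, composing these bijections yields a natural isomorphism of the functors $\b{Alg}(\f{M}(B,A_1\ot A_2),?)$ and $\b{Alg}(\f{M}(\f{M}(B,A_1),A_2),?)$ on $\b{Alg}$, and by the Yoneda lemma the two representing algebras are isomorphic.

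For a self-contained, explicit version — the one I would actually write out, to match the style of Theorem \ref{T2.2} — I would exhibit mutually inverse morphisms directly. Put $\phi_1=\f{m}(B,A_1)$ and $\phi_2=\f{m}(\f{M}(B,A_1),A_2)$. The composite family $(\i_{A_1}\ot\phi_2)\phi_1\colon B\to A_1\ot A_2\ot\f{M}(\f{M}(B,A_1),A_2)$ is a family of morphisms from $B$ to $A_1\ot A_2$, so by the universal property of $\f{m}(B,A_1\ot A_2)$ it induces a unique morphism $g\colon\f{M}(B,A_1\ot A_2)\to\f{M}(\f{M}(B,A_1),A_2)$ with $(\i_{A_1\ot A_2}\ot g)\f{m}(B,A_1\ot A_2)=(\i_{A_1}\ot\phi_2)\phi_1$. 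Conversely, regarding $\f{m}(B,A_1\ot A_2)$ as a morphism $B\to A_1\ot\big(A_2\ot\f{M}(B,A_1\ot A_2)\big)$, i.e.\ a family from $B$ to $A_1$, the universal property of $\phi_1$ gives a unique $h\colon\f{M}(B,A_1)\to A_2\ot\f{M}(B,A_1\ot A_2)$ with $(\i_{A_1}\ot h)\phi_1=\f{m}(B,A_1\ot A_2)$; then $h$ is a family from $\f{M}(B,A_1)$ to $A_2$, and the universal property of $\phi_2$ gives a unique $f\colon\f{M}(\f{M}(B,A_1),A_2)\to\f{M}(B,A_1\ot A_2)$ with $(\i_{A_2}\ot f)\phi_2=h$.

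It then remains to check that $f$ and $g$ are mutually inverse, and each identity follows from a uniqueness clause. Post-composing the defining equation of $g$ with $\i_{A_1\ot A_2}\ot f$ and using $(\i_{A_1}\ot\i_{A_2}\ot f)(\i_{A_1}\ot\phi_2)=\i_{A_1}\ot\big((\i_{A_2}\ot f)\phi_2\big)=\i_{A_1}\ot h$, one obtains $(\i_{A_1\ot A_2}\ot fg)\f{m}(B,A_1\ot A_2)=(\i_{A_1}\ot h)\phi_1=\f{m}(B,A_1\ot A_2)$, so $fg=\i$ by uniqueness in the universal property of $\f{m}(B,A_1\ot A_2)$. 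For the other identity, post-composing the defining equation of $h$ with $\i_{A_1}\ot\i_{A_2}\ot g$ and regrouping gives $\big(\i_{A_1}\ot((\i_{A_2}\ot g)h)\big)\phi_1=(\i_{A_1}\ot\phi_2)\phi_1$, whence $(\i_{A_2}\ot g)h=\phi_2$ by uniqueness in the universal property of $\phi_1$; since $(\i_{A_2}\ot gf)\phi_2=(\i_{A_2}\ot g)(\i_{A_2}\ot f)\phi_2=(\i_{A_2}\ot g)h=\phi_2$, the universal property of $\phi_2$ forces $gf=\i$.

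The routine but fiddly point — the only thing requiring care — is the bookkeeping of the associativity isomorphism $(A_1\ot A_2)\ot C\cong A_1\ot(A_2\ot C)$, which is used silently each time a family with parameter algebra $A_2\ot(\cdots)$ over $A_1$ is reinterpreted as a family over $A_1\ot A_2$ (and back); one should fix this identification once and verify that every square above is compatible with it. Beyond this I expect no genuine obstacle.
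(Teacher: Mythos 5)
Your proposal is correct, and its explicit part is essentially the paper's own proof: your $g$, $h$, $f$ are precisely the morphisms $\psi$, $\varphi$, $\psi'$ defined by diagrams (\ref{Dig2.1})--(\ref{Dig2.3}), and your verification of $fg=\i$ is the paper's computation showing $\psi'\psi$ is the identity. The one genuine (if small) divergence is in the other direction: where the paper establishes $(\i_{A_2}\ot\psi)\varphi=\phi$ by evaluating both sides on the generators $(f\ot\i_{\f{M}(B,A_1)})\phi_1(b)$ supplied by Proposition \ref{P2.1}(a), you get the same identity directly from the uniqueness clause of the universal property of $\f{m}(B,A_1)$, by applying $\i_{A_1}\ot\i_{A_2}\ot g$ to the defining equation of $h$ and comparing with the defining equation of $g$; this is slightly cleaner and dispenses with Proposition \ref{P2.1}(a) altogether. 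Your opening representability argument is also sound within the paper's framework, since Proposition \ref{P2.1}(d) already asserts that $\f{M}(B,A)$ represents $\b{Alg}(B,A\ot ?)$ and naturality of the correspondence in Proposition \ref{P2.1}(c) follows from the universal property; it gives the isomorphism with minimal bookkeeping (only the natural associativity identification), at the cost of not exhibiting the isomorphism explicitly. No gaps beyond the associativity identifications you already flag, which the paper likewise suppresses.
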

\begin{proof}
Let $\phi_i=\f{m}(B,A_i)$, $\phi_{12}=\f{m}(B,A_1\ot A_2)$ and $\phi=\f{m}(\f{M}(B,A_1),A_2)$.
Let $\psi$ be the unique morphism such that the diagram
\begin{equation}\label{Dig2.1}
\xymatrix{B\ar[d]_{\phi_1}\ar[rr]^-{\phi_{12}}&& ({A_1}\ot {A_2})\ot\f{M}(B,{A_1}\ot {A_2})\ar[d]^{\i_{({A_1}\ot {A_2})}{\ot}\psi}\\
{A_1}\ot\f{M}(B,{A_1})\ar[rr]^-{\i_{A_1}\ot\phi}&&({A_1}\ot {A_2})\ot\f{M}(\f{M}(B,{A_1}),{A_2})}
\end{equation}
is commutative. Let $\varphi$ and $\psi'$ be morphisms that make the following diagrams commutative.
\begin{equation}\label{Dig2.2}
\xymatrix{B\ar[1,1]^{\phi_{12}}\ar[r]^-{\phi_1}& {A_1}\ot\f{M}(B,{A_1})\ar[d]^{\i_{A_1}{\ot}\varphi}\\
&{A_1}\ot({A_2}\ot\f{M}(B,{A_1}\ot {A_2}))}
\end{equation}
\begin{equation}\label{Dig2.3}
\xymatrix{\f{M}(B,{A_1})\ar[1,1]^{\varphi}\ar[r]^-{\phi}&{A_2}\ot\f{M}(\f{M}(B,{A_1}),{A_2})\ar[d]^{\i_{A_2}\ot\psi'}\\
&{A_2}\ot\f{M}(B,{A_1}\ot {A_2})}
\end{equation}
by (\ref{Dig2.3}), we have $\i_{A_1}\ot\varphi=(\i_{(A_1\ot A_2)}\ot\psi')(\i_{A_1}\ot\phi)$ and so by (\ref{Dig2.1}) and (\ref{Dig2.2}) we have,
\begin{equation*}
\begin{split}
(\i_{(A_1\ot A_2)}\ot(\psi'\psi))\phi_{12}&=(\i_{(A_1\ot A_2)}\ot\psi')(\i_{(A_1\ot A_2)}\ot\psi)\phi_{12}\\
&=(\i_{(A_1\ot A_2)}\ot\psi')(\i_{A_1}\ot\phi)\phi_1\\
&=(\i_{A_1}\ot\varphi)\phi_1\\
&=\phi_{12}.
\end{split}
\end{equation*}
This equality together with universality of $\f{M}(B,A_1\ot A_2)$ imply that $\psi'\psi$ is the identity morphism.
Let $b\in B$ and $f:A_1\to\m{K}$ be a module homomorphism and let $c=(f\ot\i_{\f{M}(B,{A_1})})\phi_1(b)$ (note that
by Proposition \ref{P2.1}(a) such elements generates $\f{M}(B,A_1)$). By (\ref{Dig2.2}) and (\ref{Dig2.1}) we have,
\begin{equation*}
\begin{split}
(\i_{A_2}\ot\psi)\varphi(c)&=(\i_{A_2}\ot\psi)\varphi(f\ot\i_{\f{M}(B,{A_1})})\phi_1(b)\\
&=(\i_{A_2}\ot\psi)(f\ot\i_{({A_2}\ot\f{M}(B,{A_1}\ot {A_2}))})\phi_{12}(b)\\
&=(f\ot\i_{({A_2}\ot\f{M}(\f{M}(B,{A_1}),{A_2}))})(\i_{A_1}\ot\phi)\phi_1(b)\\
&=\phi(f\ot\i_{\f{M}(B,{A_1})})\phi_1(b)\\
&=\phi(c).
\end{split}
\end{equation*}
Thus $(\i_{A_2}\ot\psi)\varphi=\phi$. This equality together with (\ref{Dig2.3}) imply,
\begin{equation*}
\begin{split}
(\i_{A_2}\ot(\psi\psi'))\phi&=(\i_{A_2}\ot\psi)(\i_{A_2}\ot\psi')\phi\\
&=(\i_{A_2}\ot\psi)\varphi\\
&=\phi
\end{split}
\end{equation*}
Then the above equality together with universality of $\f{M}(\f{M}(B,A_1),A_2)$  show that $\psi\psi'$ is the identity morphism.
This completes the proof.
\end{proof}
Analogue of Theorem \ref{T2.3} is also satisfied for the functor $\f{M}^\r{c}$. The following lemma states a fact that will be used
in Section 5.
\begin{lemma}\label{L2.1}
Let $A$ and $B$ be algebras such that $A$ is commutative and FR. Then there is a canonical isomorphism
$\f{M}(B^\r{op},A)\cong\f{M}(B,A)^\r{op}$.
\end{lemma}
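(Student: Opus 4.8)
The plan is to exhibit an isomorphism between the universal parameter-algebras by playing off the universal properties of $\f{M}(B^\r{op},A)$ and $\f{M}(B,A)^\r{op}$ against each other, exactly as in the proofs of Theorems \ref{T2.2} and \ref{T2.3}. The key observation is that, since $A$ is commutative, for any algebra $C$ a morphism $B^\r{op}\to A\ot C$ is "the same data" as a morphism $B\to A\ot C^\r{op}$: if $\psi:B^\r{op}\to A\ot C$ is a morphism, then composing with the canonical isomorphism $A\ot C\to (A\ot C)^\r{op}\cong A^\r{op}\ot C^\r{op}=A\ot C^\r{op}$ (the last equality because $A$ is commutative) yields a morphism $B\to A\ot C^\r{op}$, and this assignment is bijective and natural in $C$. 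I would first record this naturality carefully, since it is the engine of the whole argument.

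Next I would apply it with $C=\f{M}(B^\r{op},A)^\r{op}$: the universal family $\f{m}(B^\r{op},A):B^\r{op}\to A\ot\f{M}(B^\r{op},A)$ corresponds under the above bijection to a family $B\to A\ot\f{M}(B^\r{op},A)^\r{op}$, hence by the universal property of $\f{m}(B,A)$ induces a unique morphism $\f{M}(B,A)\to\f{M}(B^\r{op},A)^\r{op}$, and dually (taking opposite algebras) a morphism $\f{M}(B,A)^\r{op}\to\f{M}(B^\r{op},A)$. Symmetrically, starting from $\f{m}(B,A)$ and the bijection applied with $C=\f{M}(B,A)^\r{op}$, and using $(B^\r{op})^\r{op}=B$, I get a morphism $\f{M}(B^\r{op},A)\to\f{M}(B,A)^\r{op}$. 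The two morphisms between $\f{M}(B^\r{op},A)$ and $\f{M}(B,A)^\r{op}$ go in opposite directions; composing them either way gives an endomorphism of a universal parameter-algebra that is compatible with its universal family, so by the uniqueness clause in the relevant universal property (Proposition \ref{P2.1}(c), or equivalently that $\f{M}(B,A)$ represents $\b{Alg}(B,A\ot?)$) each composite is the identity. This yields the desired isomorphism $\f{M}(B^\r{op},A)\cong\f{M}(B,A)^\r{op}$.

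I expect the main obstacle to be purely bookkeeping: keeping straight which tensor factor is being "opped," and verifying that the canonical map $A\ot C\to A\ot C^\r{op}$, $a\ot c\mapsto a\ot c$, really is an algebra morphism — this is precisely where commutativity of $A$ is used, since $(a\ot c)(a'\ot c')=aa'\ot cc'$ must equal the product computed in $A\ot C^\r{op}$, namely $a'a\ot cc'$, and these agree iff $aa'=a'a$. Once the naturality square for this identification is in place, the rest is a diagram chase of the same flavour as Theorem \ref{T2.3}, invoking uniqueness in the universal property to collapse the round-trip composites to identities. I would also remark that the construction is canonical, in that the isomorphism is the unique one intertwining $\f{m}(B^\r{op},A)$ with (the image of) $\f{m}(B,A)$ under the identification, which is what justifies calling it "canonical" in the statement.
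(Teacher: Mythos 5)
Your overall strategy is exactly the intended one (the paper leaves this lemma as ``straightforward''): a morphism $B^\r{op}\to A\ot C$ is the same underlying map as a morphism $B\to(A\ot C)^\r{op}$, commutativity of $A$ gives $(A\ot C)^\r{op}=A^\r{op}\ot C^\r{op}=A\ot C^\r{op}$, hence a bijection $\b{Alg}(B^\r{op},A\ot C)\cong\b{Alg}(B,A\ot C^\r{op})$ natural in $C$; feeding the two universal families through this bijection and using the uniqueness clause of the universal property to collapse the round-trip composites to identities finishes the proof, as you describe. One slip in your final paragraph should be corrected, though: the identity map $A\ot C\to A\ot C^\r{op}$, $a\ot c\mapsto a\ot c$, is \emph{not} an algebra morphism (that would force $C$ to be commutative and would make the ``op'' on the parameter algebra disappear), and the product of $a\ot c$ with $a'\ot c'$ in $A\ot C^\r{op}$ is $aa'\ot c'c$, not $a'a\ot cc'$. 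What commutativity of $A$ actually buys is that the identity map is an algebra isomorphism $(A\ot C)^\r{op}\to A\ot C^\r{op}$, since in $(A\ot C)^\r{op}$ the product is $a'a\ot c'c$ and in $A\ot C^\r{op}$ it is $aa'\ot c'c$; equivalently, the identity is an anti-isomorphism out of $A\ot C$, so the passage from $\psi:B^\r{op}\to A\ot C$ to a family $B\to A\ot C^\r{op}$ is a reinterpretation of the same underlying map (opposite of source and target), not a composition with an isomorphism of $A\ot C$. With that bookkeeping repaired, your argument is complete and is the proof the paper has in mind.
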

\begin{proof}
Straightforward.
\end{proof}
\section{Algebraic families of isomorphisms}
In this section we define \emph{algebraic families of isomorphisms} and \emph{algebraic families of all isomorphisms}.
\begin{definition}\label{D2.3}
Let $A$ and $B$ be algebras and let $\psi:B\to A\ot C$ be a family of morphisms from $B$ to $A$.
Suppose that there exists a family $\phi:A\to B\ot C$ such that
$(\i_B\ot\mu_C)(\phi\circ\psi)=\i_B\ot1$ and $(\i_A\ot\mu_C)(\psi\circ\phi)=\i_A\ot1$.
Then $\psi$ is called a family of isomorphisms from $B$ onto $A$ or, an invertible family of morphisms from $B$ to $A$.
Also, $\phi$ is called an inverse for $\psi$. The class of all invertible families of morphisms from $B$ to $A$ is denoted by $\c{I}(B,A)$.
\end{definition}
Suppose that $\psi:B\to A\ot C$ is an invertible family and let $\phi_1,\phi_2:A\to B\ot C$ be two inverses for $\psi$.
Then from the identity $(\i_A\ot\mu_C)(\psi\ot\i_C)\phi_2(a)=a\ot1$ ($a\in A$) it is easily seen that
$\phi_1=(\i_B\ot\mu_C)(\phi_1\ot\i_C)(\i_A\ot\mu_C)(\psi\ot\i_C)\phi_2$.
Also it follows from associativity of $\mu_C$ (i.e. $\mu_C(\i_C\ot\mu_C)=\mu_C(\mu_C\ot\i_C)$) and identity
$(\i_B\ot\mu_C)(\phi_1\ot\i_C)\psi(b)=b\ot1$ ($b\in B$) that
$\i_{B\ot C}=(\i_B\ot\mu_C)(\phi_1\ot\i_C)(\i_A\ot\mu_C)(\psi\ot\i_C)$.
Thus we conclude $\phi_1=\phi_2$. So we denote the inverse of an invertible family $\psi$ by $\psi^{-1}$.
It is also easily checked that the composition of $\psi$ with another invertible family $\psi':B'\to B\ot C'$
is an invertible family, too. Indeed, $(\psi\circ\psi')^{-1}=(\i_{B'}\ot F)({\psi'}^{-1}\circ\psi^{-1})$.
\begin{example}\label{E3.1}
\begin{enumerate}
\item[(a)] Suppose that $f_1,\cdots,f_n$ is a finite collection of isomorphisms from $B$ onto $A$. Then the trivial family
$\psi:B\to A\ot\m{K}^n$, defined by $\psi(b)=\sum_{i=1}^nf_i(b)\ot e_i$ ($b\in B$), is an invertible family. Indeed,
$\psi^{-1}(a)=\sum_{i=1}^nf^{-1}_i(a)\ot e_i$ ($a\in A$). Also, note that the empty family of morphisms is an invertible family.
\item[(b)] Let $(B,\Delta,\epsilon,S)$ be a commutative Hopf-algebra and $V$ be a $B$-comodule with the coaction $\rho:V\to V\ot B$.
Let $\rho'=(\i_V\ot S)\rho$. We have
\begin{equation*}
\begin{split}
(\i_V\ot\mu_B)(\rho'\circ\rho)&=(\i_V\ot\mu_B)([(\i_V\ot S)\rho]\ot\i_B)\rho\\
&=(\i_V\ot\mu_B)(\i_V\ot S\ot\i_B)(\rho\ot\i_B)\rho\\
&=(\i_V\ot\mu_B)(\i_V\ot S\ot\i_B)(\i_V\ot\Delta)\rho\\
&=(\i_V\ot[\mu_B(S\ot\i_B)\Delta])\rho\\
&=(\i_V\ot\epsilon1_B)\rho\\
&=\i_V\ot1_B.
\end{split}
\end{equation*}
Analogously, we have $(\i_V\ot\mu_B)(\rho\circ\rho')=\i_V\ot1_B$. Therefore, $\rho$ is an invertible family with inverse $\rho'$.
\end{enumerate}
\end{example}
\begin{proposition}\label{P2.3}
Let $A$ and $B$ be FR algebras. Then there exists an $\c{I}(B,A)$-universal family, say $\phi:B\to A\ot Z$. Moreover,
this family has the following additional properties.
\begin{enumerate}
\item[(a)] $Z$ is a finitely generated algebra.
\item[(b)] There is a canonical one-to-one correspondence between isomorphisms from $B$ onto $A$ and characters of $Z$.
\end{enumerate}
Analogues of the above statements are also satisfied for the subclass $\c{I}^\r{c}(B,A)$.
Indeed, the canonical morphism $\phi^\r{c}:B\to A\ot Z/[Z,Z]$, induced by $\phi$, is an $\c{I}^\r{c}(B,A)$-universal family.
\end{proposition}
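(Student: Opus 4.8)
The plan is to realise $Z$ as a concrete quotient of a free product, mimicking the construction in the proof of Proposition \ref{P2.1}. Since $A$ and $B$ are FR, both $\f{M}(B,A)$ and $\f{M}(A,B)$ are defined; let $P:=\f{M}(B,A)\ast\f{M}(A,B)$ be their coproduct in $\b{Alg}$, with canonical morphisms $j_1\colon\f{M}(B,A)\to P$ and $j_2\colon\f{M}(A,B)\to P$, and put $\Psi:=(\i_A\ot j_1)\f{m}(B,A)\colon B\to A\ot P$ and $\Phi:=(\i_B\ot j_2)\f{m}(A,B)\colon A\to B\ot P$. Fix bases $b_1,\dots,b_m$ of $B$ and $a_1,\dots,a_n$ of $A$, and write $(\i_B\ot\mu_P)(\Phi\circ\Psi)(b_k)=\sum_l b_l\ot w_{lk}$ and $(\i_A\ot\mu_P)(\Psi\circ\Phi)(a_k)=\sum_l a_l\ot v_{lk}$ with $w_{lk},v_{lk}\in P$. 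Let $\c{J}\subseteq P$ be the two-sided ideal generated by all $w_{lk}-\delta_{lk}1$ and all $v_{lk}-\delta_{lk}1$, set $Z:=P/\c{J}$ with quotient morphism $q\colon P\to Z$, and define $\phi:=(\i_A\ot q)\Psi\colon B\to A\ot Z$ together with $\phi':=(\i_B\ot q)\Phi\colon A\to B\ot Z$. By the very choice of $\c{J}$, applying $\i_B\ot q$ and $\i_A\ot q$ to the two displayed identities gives $(\i_B\ot\mu_Z)(\phi'\circ\phi)=\i_B\ot1$ and $(\i_A\ot\mu_Z)(\phi\circ\phi')=\i_A\ot1$, so $\phi\in\c{I}(B,A)$ with inverse $\phi'$.

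Next I would prove universality. Let $\psi\colon B\to A\ot C$ be an arbitrary invertible family with (unique) inverse $\phi_0:=\psi^{-1}$. Proposition \ref{P2.1}(c) produces unique morphisms $\overline\psi\colon\f{M}(B,A)\to C$ and $\overline{\phi_0}\colon\f{M}(A,B)\to C$ with $\psi=(\i_A\ot\overline\psi)\f{m}(B,A)$ and $\phi_0=(\i_B\ot\overline{\phi_0})\f{m}(A,B)$, and the coproduct property of $P$ yields a unique $h\colon P\to C$ with $hj_1=\overline\psi$, $hj_2=\overline{\phi_0}$; hence $\psi=(\i_A\ot h)\Psi$ and $\phi_0=(\i_B\ot h)\Phi$. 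A direct check on basis elements (using that $h$ is an algebra morphism, so $\mu_C(h\ot h)=h\mu_P$) gives $(\i_B\ot h\ot h)(\Phi\circ\Psi)=\phi_0\circ\psi$, whence $\sum_l b_l\ot h(w_{lk})=(\i_B\ot\mu_C)(\phi_0\circ\psi)(b_k)=b_k\ot1$; as the $b_l$ form a basis, $h(w_{lk})=\delta_{lk}1$, and symmetrically $h(v_{lk})=\delta_{lk}1$. Thus $h$ annihilates $\c{J}$ and factors as $h=\overline h\,q$ for a unique $\overline h\colon Z\to C$ (unique since $q$ is onto), giving $\psi=(\i_A\ot\overline h)\phi$. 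For uniqueness of $\overline h$: if $\lambda\colon Z\to C$ satisfies $(\i_A\ot\lambda)\phi=\psi$, then pushing the two inverse identities of $\phi$ forward along $\lambda$ shows $\psi$ is invertible with inverse $(\i_B\ot\lambda)\phi'$; uniqueness of the inverse family forces $(\i_B\ot\lambda)\phi'=\phi_0$, and then $\lambda q j_1=\overline\psi$ and $\lambda q j_2=\overline{\phi_0}$ by Proposition \ref{P2.1}(c), so $\lambda q=h$ and $\lambda=\overline h$. Hence $\phi$ is $\c{I}(B,A)$-universal.

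For the additional properties: (a) an FR algebra is finitely generated (any basis generates it), so $\f{M}(B,A)$ and $\f{M}(A,B)$ are finitely generated by Proposition \ref{P2.1}(b); therefore $P$, and hence its quotient $Z$, is finitely generated. (b) Taking $C=\m{K}$ and identifying $A\ot\m{K}\cong A$, a family $B\to A\ot\m{K}$ is just a morphism $B\to A$, and (since $\mu_\m{K}$ is an isomorphism) it is an invertible family exactly when that morphism is an algebra isomorphism $B\to A$; combined with the universal property of $\phi$ for $C=\m{K}$ this yields the asserted bijection between isomorphisms $B\to A$ and characters of $Z$. Finally, the commutative statement follows from Lemma \ref{L2.0} applied to $\c{C}=\c{I}(B,A)$: the induced family $\phi^\r{c}:=(\i_A\ot\pi)\phi\colon B\to A\ot Z/[Z,Z]$ (with $\pi\colon Z\to Z/[Z,Z]$ the canonical projection) still lies in $\c{I}(B,A)$, its inverse being $(\i_B\ot\pi)\phi'$ (obtained by pushing the inverse identities of $\phi$ forward along $\pi$), so Lemma \ref{L2.0} makes $\phi^\r{c}$ a $\c{I}^\r{c}(B,A)$-universal family; since $Z/[Z,Z]$ is a finitely generated commutative algebra with the same characters as $Z$, the analogues of (a) and (b) are immediate.

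I expect the only genuine work to lie in the universality step: verifying the identity $(\i_B\ot h\ot h)(\Phi\circ\Psi)=\phi_0\circ\psi$ and its $A$-analogue — this is precisely what allows the comparison morphism $h$ to ``detect'' the defining relations of $\c{J}$ — and then invoking the uniqueness of the inverse family (established right after Definition \ref{D2.3}) to pin down $\overline h$. The construction of $Z$ itself, property (a), and property (b) are routine transcriptions of the arguments already used for Proposition \ref{P2.1} and Lemma \ref{L2.0}.
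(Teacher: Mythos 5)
Your proof is correct and is essentially the paper's construction in different packaging: the paper presents $Z$ directly by generators $z_{ji},z'_{ij}$ subject to the four families of relations making them the coefficients of a mutually inverse pair of families, which is precisely your quotient $(\f{M}(B,A)\ast\f{M}(A,B))/\c{J}$, and the verification of universality, of (a), (b), and of the commutative case via Lemma \ref{L2.0} proceeds along the same lines. Your explicit uniqueness argument for $\overline{h}$ --- pinning down its values on the ``inverse'' generators by invoking uniqueness of the inverse family --- nicely spells out a point the paper disposes of with the terse remark that uniqueness ``follows from universality of $Z$''.
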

We call both of $\phi$ and $\phi^\r{c}$ \emph{algebraic family of all isomorphisms from $B$ onto $A$}, and sometimes denote them
by $\f{i}(B,A)$ and $\f{i}^\r{c}(B,A)$, and their parameter-algebras by $\f{I}(B,A)$ and $\f{I}^\r{c}(B,A)$, respectively.
\begin{proof}
Suppose that $A_0=\{a_i:i=1,\cdots,a_m\}$ (resp. $B_0=\{b_j:j=1,\cdots,b_n\}$) is a basis for $A$ (resp. $B$) as free-module.
Let $Z$ be the universal algebra generated by $z_{ji}$ and $z'_{ij}$ such that the following four classes of equations are satisfied.
\begin{enumerate}
\item[(i)] $q(\sum_{i=1}^ma_i\ot z_{1i},\cdots,\sum_{i=1}^ma_i\ot z_{ni})=0$ for every $q\in{I}_{B_0}$.
\item[(ii)] $p(\sum_{j=1}^nb_j\ot {z'}_{1j},\cdots,\sum_{j=1}^nb_j\ot {z'}_{mj})=0$ for every $p\in{I}_{A_0}$.
\item[(iii)] $\sum_{i=1}^m{z'}_{ik}z_{ji}=0$ and $\sum_{i=1}^m{z'}_{ij}z_{ji}=1$ for $1\leq j,k\leq n$ with $j\neq k$.
\item[(iv)] $\sum_{j=1}^nz_{jl}{z'}_{ij}=0$ and $\sum_{j=1}^nz_{ji}{z'}_{ij}=1$ for $1\leq i,l\leq m$ with $i\neq l$.
\end{enumerate}
(i) shows that the assignment $b_j\mapsto\sum_{i=1}^ma_i\ot z_{ji}$ defines a morphism
$\phi:B\to A\ot Z$. Similarly, (ii) shows that the assignment $a_i\mapsto\sum_{j=1}^nb_j\ot{z'}_{ij}$
defines a morphism $\phi':A\to B\ot Z$. (iii) shows that $(\i_B\ot\mu_Z)(\phi'\ot\i_Z)\phi(b_j)=b_j\ot1$.
Similarly, (iv) shows that $(\i_A\ot\mu_Z)(\phi\ot\i_Z)\phi'(a_i)=a_i\ot1$. So $\phi$ is a family of isomorphisms from $B$ onto $A$
with inverse $\phi'$. Now suppose that $\psi:B\to A\ot C$ is a family of isomorphisms and let $c_{ji}$ and ${c'}_{ij}$
in $C$ be such that $\psi(b_j)=\sum_{i=1}^ma_i\ot c_{ji}$ and $\psi^{-1}(a_i)=\sum_{j=1}^nb_j\ot {c'}_{ij}$.
Then, the analogs of the above four classes of equations
are satisfied for $c_{ji}$ and ${c'}_{ij}$. Thus, the universality of $Z$ shows that
the assignments $z_{ji}\mapsto c_{ji}$ and ${z'}_{ij}\mapsto{c'}_{ij}$ define an algebra morphism $\bar{\psi}:Z\to C$
which satisfies $\psi=(\i_A\ot\bar{\psi})\phi$. Uniqueness of $\bar{\psi}$ also follows from universality of $Z$.
Therefore $\phi$ is a universal family of isomorphisms. (a) is trivial by the construction of $Z$. Let $\psi:B\to A$ be an isomorphism. Then
$\psi$ can be considered as a family of isomorphisms with the parameter-algebra $\m{K}$, and so,
$\bar{\psi}$ is a character on $Z$. This proves (b).
The other statements follow from Lemma \ref{L2.0}.
\end{proof}
\begin{remark}
Let $A$ and $B$ be FR algebras.
\begin{enumerate}
\item[(a)] If follows from Proposition \ref{P2.3} (b)
that if there is an isomorphism from $B$ onto $A$ then $\f{I}(B,A)\neq0$ and
also $\f{I}^\r{c}(B,A)\neq0$.
\item[(d)] Conversely, if $\m{K}$ is an algebraically closed field, then from
$\f{I}^\r{c}(B,A)\neq0$ it follows that $B$ and $A$ are isomorphic (see Remark \ref{R2.2} (d)).
\end{enumerate}
\end{remark}
Let us call two algebras $A$ and $B$ \emph{quasi-isomorphic} if there is a nonempty algebraic family of isomorphisms from $B$
onto $A$. (Note that this notion is very far from the notion of quasi-isomorphic chain complexes in homological algebra.)
Then, quasi-isomorphism is an equivalence relation on the class of algebras.
It is clear that isomorphism (of algebras) implies quasi-isomorphism. But there are non-isomorphic algebras which are quasi-isomorphic:
Let $C$ be an algebra which has not IBN \cite{Lam1}, that is there are natural numbers $n$ and $m$ with $n\neq m$ such that
$C^n$ and $C^m$ are isomorphic left free $C$-modules. Equivalently, there are elements $s_{ji}$ and $t_{ij}$ in $C$,
for $1\leq i\leq m$ and $1\leq j\leq n$, such that $\sum_{i=1}^ms_{ji}t_{ij'}=\delta_{jj'}$ and $\sum_{j=1}^nt_{ij}s_{ji'}=\delta_{ii'}$.
Let $A=\m{K}[x_1,\cdots,x_m]$ and $B=\m{K}[y_1,\cdots,y_n]$ and let the families $\psi:A\to B\ot C$ and $\phi:B\to A\ot C$
be defined by $\psi(x_i)=\sum_{j=1}^ny_j\ot s_{ji}$ and $\phi(y_j)=\sum_{i=1}^mx_i\ot t_{ij}$. Then it is easily checked that
$\psi=\phi^{-1}$. So, $A$ and $B$ are quasi-isomorphic. Indeed, this is the case for every $n$ and $m$; this follows from the fact that
if $C$ is the algebra of all module endomorphisms on the infinite direct sum $\oplus_{i=1}^\infty\m{K}$ then $C^n$ and $C^m$
are isomorphic for every $n$ and $m$ as left $C$-modules (see \cite[Example 1.4]{Lam1}).

We end this section with a list of interesting questions.
Is there any relation between quasi-isomorphism and Morita equivalence, or algebraic homotopy equivalence (\cite{Gersten1}, \cite{Jardine1})?
Morita equivalent algebras are characterized by existence of some specific bimodules, see \cite[Section 18C]{Lam1}. Is it possible to characterize
quasi-isomorphic algebras in this manner?
For any fixed algebra $C$, let $\b{Alg}^*$ be a category whose objects are algebras and a typical morphism from $B$ to $A$ is a family
$\phi:B\to A\ot C^{\ot n}$, for some $n\geq0$, together with the composition of families as composition law of the category. (Note that
if $C=\m{K}$ then $\b{Alg}^*=\b{Alg}$.) Is there a Quillen's model category structure (\cite{Baues1}) on $\b{Alg}^*$ such that algebraic
families of isomorphisms play the role of quasi-equivalences of the model? Indeed the term quasi-isomorphism
introduced above refers to this (conjectured) property.
\section{Some other universal families of morphisms}
Let $A$ and $B$ be fixed algebras. Consider the following four types of algebraic families.
\begin{enumerate}
\item[(1)] Let $M$ be a $\m{K}$-module and, $\beta:M\to B$ and $\alpha:M\to A$ be module homomorphisms.
$\c{M}_1$ denotes the class of all families $\psi:B\to A\ot C$ satisfying
$$\alpha(m)\ot1=\psi\beta(m)\hspace{10mm}(m\in M).$$
\item[(2)] Let $M'$ be a $\m{K}$-module and, $\beta':B\to M$ and $\alpha':A\to M$ be module homomorphisms.
$\c{M}_2$ denotes the class of all families $\psi:B\to A\ot C$ satisfying
$$\beta'(b)\ot1=(\alpha'\ot\i_{C})\psi(b)\hspace{10mm}(b\in B).$$
\item[(3)] Let $\Delta:B\to B\ot B$ and $\Gamma:A\to A\ot A$ be module homomorphisms. $\c{M}_3$ denotes the class
of all families $\psi:B\to A\ot C$ satisfying
$$(\Gamma\ot\i_C)\psi=(\i_{A\ot A}\ot\mu_C)(\i_A\ot F\ot\i_C)(\psi\ot\psi)\Delta.$$
\item[(4)] Let $N$ be a $\m{K}$-module and, $\Lambda:B\to B\ot N$ and $\Theta:A\to A\ot N$ be module homomorphisms. $\c{M}_4$
denotes  the class of all families $\psi:B\to A\ot C$ satisfying
$$(\i_A\ot F)(\Theta\ot\i_C)\psi=(\psi\ot\i_N)\Lambda.$$
\end{enumerate}
The algebraic families introduced in (1)-(4) are respectively analogues of the following four classes of classical families of maps.
\begin{enumerate}
\item[(1')] Families of bundle morphisms between two fibre bundles over a classical space.
\item[(2')] Families of continuous maps between two classical spaces which are identity over a common subspace.
\item[(3')] Families of continuous group homomorphisms between two topological groups.
\item[(4')] Families of $G$-maps between two $G$-spaces for a topological group $G$.
\end{enumerate}
All proofs of theorems below are analogous to the proofs of Propositions \ref{P2.1} and \ref{P2.3}, and are omitted for brevity.
\begin{theorem}\label{T3.1}
Together with the assumptions of (1) suppose that $A$ is FR. Then there exists an
$\c{M}_1$-universal family, denoted by ${\phi_1}:B\to A\ot {Z_1}$. Moreover,
there is a canonical one-to-one correspondence
between characters of ${Z_1}$ and morphisms $f:B\to A$ satisfying $f\beta=\alpha$.
\end{theorem}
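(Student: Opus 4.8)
The plan is to realise $Z_1$ as a quotient of $\f{M}(B,A)$ rather than to build it from scratch by generators and relations; this route is cleaner because it never requires writing $\beta(m)$ as a (non-canonical) polynomial in a generating set of $B$, as the direct imitation of the proof of Proposition \ref{P2.1} would.

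First I would invoke Proposition \ref{P2.1} to get $Z:=\f{M}(B,A)$ together with the universal family $\phi:=\f{m}(B,A):B\to A\ot Z$, and I would fix a basis $a_1,\dots,a_n$ of $A$ as a free $\m{K}$-module. For each $m\in M$ the element $\phi(\beta(m))\in A\ot Z$ has a unique expansion $\phi(\beta(m))=\sum_{i=1}^n a_i\ot w_{m,i}$ with $w_{m,i}\in Z$, and I would likewise write $\alpha(m)=\sum_{i=1}^n\alpha_i(m)a_i$ with $\alpha_i(m)\in\m{K}$ (each $\alpha_i:M\to\m{K}$ being a module homomorphism). Then I would let $J$ be the two-sided ideal of $Z$ generated by all $w_{m,i}-\alpha_i(m)1_Z$ ($m\in M$, $1\le i\le n$), put $Z_1:=Z/J$, let $\pi:Z\to Z_1$ be the quotient morphism, and set $\phi_1:=(\i_A\ot\pi)\phi:B\to A\ot Z_1$.

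The next step is to check that $\phi_1\in\c{M}_1$: applying $\i_A\ot\pi$ to the expansion of $\phi(\beta(m))$ gives $\phi_1(\beta(m))=\sum_i a_i\ot\pi(w_{m,i})=\sum_i a_i\ot\alpha_i(m)1_{Z_1}=\alpha(m)\ot1$, since each $w_{m,i}-\alpha_i(m)1_Z$ lies in $J$. For universality I would take any $\psi:B\to A\ot C$ in $\c{M}_1$; by Proposition \ref{P2.1}(c) there is a unique morphism $\bar{\psi}:Z\to C$ with $\psi=(\i_A\ot\bar{\psi})\phi$. Applying $\i_A\ot\bar{\psi}$ to the expansion of $\phi(\beta(m))$ gives $\psi(\beta(m))=\sum_i a_i\ot\bar{\psi}(w_{m,i})$, whereas the defining relation of $\c{M}_1$ reads $\psi(\beta(m))=\alpha(m)\ot1=\sum_i a_i\ot\alpha_i(m)1_C$; comparing coefficients with respect to the basis $a_1,\dots,a_n$ yields $\bar{\psi}(w_{m,i})=\alpha_i(m)1_C$ for all $m,i$. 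Hence $\bar{\psi}$ annihilates every generator of $J$ and factors uniquely as $\bar{\psi}=\tilde{\psi}\pi$ with $\tilde{\psi}:Z_1\to C$ a morphism, and, using surjectivity of $\pi$ together with the uniqueness clause of Proposition \ref{P2.1}(c), $\tilde{\psi}$ is the unique morphism with $\psi=(\i_A\ot\tilde{\psi})\phi_1$. This is precisely the assertion that $\phi_1$ is $\c{M}_1$-universal. Taking $C=\m{K}$, a morphism $f:B\to A$ with $f\beta=\alpha$ is exactly a member of $\c{M}_1$ with parameter-algebra $\m{K}$, so the correspondence $\psi\mapsto\tilde{\psi}$ restricts to the claimed bijection between such $f$ and the characters of $Z_1$.

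I do not expect a genuine obstacle here: the verifications are all routine (that $\phi$ and $\pi$ compose to an algebra morphism; that the factorisation through $Z/J$ is unique; that coefficients against a basis may be compared termwise). The only point worth a moment's care, and the motivation for the quotient construction above, is that the imposed relations be canonical — which the ideal $J$ makes manifest; one could even index its generators over a mere $\m{K}$-generating set of $M$, since $\phi$, $\beta$ and the $\alpha_i$ are $\m{K}$-linear. The commutative analogue, with $\c{M}_1^\r{c}$ and $Z_1/[Z_1,Z_1]$, then follows by passing to the commutator quotient exactly as in Lemma \ref{L2.0} and Remark \ref{R2.1}.
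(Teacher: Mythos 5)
Your proof is correct. The paper itself omits the argument, saying only that it is ``analogous to the proofs of Propositions \ref{P2.1} and \ref{P2.3}'', i.e.\ a direct generators-and-relations construction: choose a generating set $S$ of $B$ and a basis of $A$, take the algebra on generators $z_{si}$ subject to the relations coming from $I_S$, and then impose, for each $m\in M$, the relations expressing $\phi_1\beta(m)=\alpha(m)\ot1$ after writing $\beta(m)$ as a polynomial in $S$. You instead take the already-constructed universal object $Z=\f{M}(B,A)$ and pass to the quotient by the ideal $J$ generated by the coefficients (against the fixed basis of $A$) of $\phi(\beta(m))-\alpha(m)\ot1$; the verification that $\phi_1=(\i_A\ot\pi)\phi$ lands in $\c{M}_1$ and that every $\bar{\psi}$ kills $J$ is exactly as you say, and uniqueness follows from surjectivity of $\pi$ plus the uniqueness clause of Proposition \ref{P2.1}(c). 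The two routes buy slightly different things: yours is canonical (no choice of presentation of $B$, no choice of polynomial expression for $\beta(m)$, and the extra relations are indexed intrinsically by $M$) and reuses the universal property as a black box, which also makes the character correspondence and the passage to $\c{M}_1^{\r{c}}$ via Lemma \ref{L2.0} immediate; the paper's direct construction, on the other hand, yields an explicit presentation of $Z_1$ in one step, which is the form of argument the author recycles uniformly for Theorems \ref{T3.1}--\ref{T3.5} (including the invertible-family versions, where relations of both kinds are imposed simultaneously). Of course the two constructions produce isomorphic parameter-algebras by uniqueness of universal families, so nothing is lost.
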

\begin{theorem}\label{T3.2}
Together with the assumptions of (2) suppose that $A$ is FR and $M'$ is free as module.
Then there is an $\c{M}_2$-universal family, denoted by ${\phi_2}:B\to A\ot {Z_2}$. Moreover,
there is a canonical one-to-one correspondence
between characters of ${Z_2}$ and morphisms $f:B\to A$ satisfying $\beta'=\alpha'f$.
\end{theorem}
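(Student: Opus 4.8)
The plan is to mimic the construction in the proof of Proposition \ref{P2.1}, adding the new relations forced by the constraint $\beta'(b)\ot 1 = (\alpha'\ot\i_C)\psi(b)$. Fix a basis $\{a_i : i=1,\dots,n\}$ of $A$ as a free $\m{K}$-module, and (using that $M'$ is free) fix a basis $\{m'_k : k\in K\}$ of $M'$. Let $S\subseteq B$ generate $B$ as an algebra. First I would form the universal algebra $Z$ generated by symbols $z_{si}$ ($s\in S$, $1\le i\le n$) subject to the polynomial relations $p(\{\sum_i a_i\ot z_{si}\}_{s\in S})=0$ for all $p\in I_S$ exactly as in Proposition \ref{P2.1}; these ensure that $s\mapsto\sum_i a_i\ot z_{si}$ extends to a morphism. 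Then, to encode the $\c{M}_2$-constraint, I would impose on $Z$ the additional relations obtained by expanding $\beta'(b)\ot 1 = (\alpha'\ot\i_C)\psi(b)$: writing $\alpha'(a_i)=\sum_k \lambda_{ik}m'_k$ with $\lambda_{ik}\in\m{K}$ and $\beta'(b)=\sum_k \nu_{bk}m'_k$, the constraint for $b$ becomes, after comparing coefficients of the basis vectors $m'_k$, the family of relations $\sum_i \lambda_{ik}\,\Phi(b)_i = \nu_{bk}1$ in $Z$, where $\Phi(b)_i\in Z$ denotes the $i$-th coefficient of the image of $b$ under the tautological map (a $\m{K}$-linear combination of products of the $z_{si}$). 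Let $Z_2$ be the quotient of $Z$ by the ideal generated by all these elements (as $b$ ranges over $B$ — or equivalently over $S$, since the constraint is compatible with multiplication and the structural map), and let $\phi_2:B\to A\ot Z_2$ be the induced morphism.

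Next I would verify the universal property. Given any $\psi:B\to A\ot C$ in $\c{M}_2$, write $\psi(s)=\sum_i a_i\ot c_{si}$; the relations $p(\{\sum_i a_i\ot c_{si}\})=0$ hold for $p\in I_S$ because $\psi$ is a morphism, and the relations $\sum_i\lambda_{ik}c_{bk,i}=\nu_{bk}1$ hold precisely because $\psi\in\c{M}_2$ (comparing coefficients of $m'_k$ in the defining identity of $\c{M}_2$, using freeness of $M'$). Hence the assignment $z_{si}\mapsto c_{si}$ respects all defining relations of $Z_2$ and gives a morphism $\bar\psi:Z_2\to C$ with $(\i_A\ot\bar\psi)\phi_2=\psi$; uniqueness is immediate since the $z_{si}$ generate $Z_2$. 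This shows $\phi_2$ is $\c{M}_2$-universal. Finally, the correspondence between characters $\chi$ of $Z_2$ and morphisms $f:B\to A$ with $\beta'=\alpha'f$ comes from regarding such an $f$ as a family with parameter-algebra $\m{K}$ (so $f\in\c{M}_2$ exactly when $\beta'=\alpha'f$, again by freeness of $M'$) and invoking the just-proved universal property with $C=\m{K}$.

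The step I expect to require the most care is checking that the extra relations are well-posed, i.e. that the ideal encoding the $\c{M}_2$-constraint is generated by the relations coming from $b$ ranging over the generating set $S$ together with the unit, rather than needing all of $B$ — this uses that if $\psi$ is already a morphism then $b\mapsto(\beta'(b)\ot1)-(\alpha'\ot\i_C)\psi(b)$ is $\m{K}$-linear but need not be multiplicative, so one must either keep the relations indexed by all $b\in B$ (harmless, since they all hold automatically in any test object) or argue that $\beta',\alpha'$ interact suitably with the algebra structure. Since the theorem statement only asserts existence of the universal family, indexing the relations by all of $B$ is the safe route; the role of the hypothesis that $M'$ is free is exactly to let us extract scalar relations by comparing coefficients against a basis, and the role of $A$ being FR is, as in Proposition \ref{P2.1}, to make the coefficient functions $\Phi(b)_i$ well-defined elements of the universal algebra.
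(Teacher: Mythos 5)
Your proposal is correct and follows exactly the route the paper intends: Theorem \ref{T3.2} is stated with its proof omitted as ``analogous to the proofs of Propositions \ref{P2.1} and \ref{P2.3}'', i.e.\ the generators-and-relations construction with the extra relations obtained by expanding the $\c{M}_2$-constraint against a basis of $M'$ (this is precisely where freeness of $M'$ enters), followed by the same verification of universality and the character correspondence via families with parameter-algebra $\m{K}$. Your decision to index the constraint relations by all $b\in B$ rather than only by the generating set (since $b\mapsto\beta'(b)\ot1-(\alpha'\ot\i_C)\psi(b)$ is only $\m{K}$-linear) is the right resolution of the one delicate point.
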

\begin{theorem}\label{T3.3}
Together with the assumptions of (3) suppose that $B$ is free as module and $A$ is FR.
Then there is an $\c{M}_3$-universal family, denoted by ${\phi_3}:B\to A\ot {Z_3}$. Moreover,
there is a canonical one-to-one correspondence
between characters of ${Z_3}$ and morphisms $f:B\to A$ satisfying $(f\ot f)\Delta=\Gamma f$.
\end{theorem}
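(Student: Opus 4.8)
The plan is to follow the pattern of the proofs of Propositions \ref{P2.1} and \ref{P2.3}: construct $Z_3$ directly by generators and relations, the relations being exactly those forcing both morphism-hood and membership in $\c{M}_3$. Fix a basis $\{a_i\}_{i=1}^m$ of $A$ (possible since $A$ is FR) and a basis $B_0=\{b_j\}_{j\in J}$ of $B$ (possible since $B$ is free); note $B_0$ also generates $B$ as an algebra. Using freeness, expand the module homomorphisms in coordinates: $\Gamma(a_i)=\sum_{k,l}\gamma_i^{kl}\,a_k\ot a_l$ and $\Delta(b_j)=\sum_{k,l}\delta_j^{kl}\,b_k\ot b_l$ with $\gamma_i^{kl},\delta_j^{kl}\in\m{K}$ (finite sums). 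Let $Z_3$ be the universal algebra generated by symbols $z_{ji}$ ($j\in J$, $1\le i\le m$) subject to
\begin{enumerate}
\item[(I)] $q(\{\sum_i a_i\ot z_{ji}\}_j)=0$ in $A\ot Z_3$ for every $q\in I_{B_0}$, and
\item[(II)] $\sum_i\gamma_i^{kl}z_{ji}=\sum_{k',l'}\delta_j^{k'l'}z_{k'k}z_{l'l}$ for all $j,k,l$.
\end{enumerate}
By (I), $b_j\mapsto\sum_i a_i\ot z_{ji}$ defines a morphism $\phi_3:B\to A\ot Z_3$, exactly as in Proposition \ref{P2.1}; and equating coefficients of the basis vectors $a_k\ot a_l$ of $A\ot A$ shows that (II) says precisely $\phi_3\in\c{M}_3$.

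Next I would check universality. Let $\psi:B\to A\ot C$ be any family in $\c{M}_3$ and write $\psi(b_j)=\sum_i a_i\ot c_{ji}$ with $c_{ji}\in C$. Since $\psi$ is a morphism, the $c_{ji}$ satisfy the analogue of (I); since $\psi\in\c{M}_3$, expanding $(\Gamma\ot\i_C)\psi$ and $(\i_{A\ot A}\ot\mu_C)(\i_A\ot F\ot\i_C)(\psi\ot\psi)\Delta$ in the basis $\{a_k\ot a_l\}$ of $A\ot A$ shows the $c_{ji}$ satisfy the analogue of (II). Hence $z_{ji}\mapsto c_{ji}$ respects all defining relations of $Z_3$, so it determines an algebra morphism $\bar\psi:Z_3\to C$ with $(\i_A\ot\bar\psi)\phi_3=\psi$, unique since the $z_{ji}$ generate $Z_3$. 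Thus $\phi_3$ is $\c{M}_3$-universal. (Equivalently one may take $Z_3$ to be the quotient of $\f{M}(B,A)$ by the two-sided ideal generated by the coordinates, relative to a basis of $A\ot A$, of the difference of the two sides of the $\c{M}_3$-identity evaluated at $\f{m}(B,A)$; universality then follows from Proposition \ref{P2.1} once one checks $\bar\psi$ annihilates that ideal, which uses multiplicativity of $\bar\psi$ to push it past $\mu_C$.)

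Finally, the correspondence: a character $\chi:Z_3\to\m{K}$ yields $f:=(\i_A\ot\chi)\phi_3:B\to A\ot\m{K}=A$, a morphism with $f(b_j)=\sum_i\chi(z_{ji})a_i$. Applying $\i_{A\ot A}\ot\chi$ to the identity $\phi_3\in\c{M}_3$ (equivalently, applying $\chi$ to the relations (II)) shows $f\in\c{M}_3$; with $C=\m{K}$, $\mu_\m{K}$ is an isomorphism and, under $A\ot\m{K}\cong A$, the $\c{M}_3$-identity reduces to $(f\ot f)\Delta=\Gamma f$. Conversely, a morphism $f$ with $(f\ot f)\Delta=\Gamma f$ is a trivial family lying in $\c{M}_3$, hence by universality factors uniquely as $(\i_A\ot\bar f)\phi_3$ for a character $\bar f$ of $Z_3$; these two assignments are mutually inverse, giving the asserted bijection.

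The only real work is bookkeeping: translating the $\c{M}_3$-identity — which mixes the flip $F$ with the multiplication $\mu_C$ — into the coordinate relation (II) for both $\phi_3$ and an arbitrary $\psi$, and (in the quotient formulation) verifying that the ideal one quotients by is exactly annihilated by every $\bar\psi$. The freeness hypotheses on $A$ and $B$ enter solely to produce the scalar structure constants $\gamma_i^{kl},\delta_j^{kl}$ and the index sets making (I)–(II) meaningful; beyond that I expect no genuine obstacle.
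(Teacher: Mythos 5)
Your proof is correct, and it is exactly the argument the paper intends: Theorem \ref{T3.3} is stated with its proof omitted as ``analogous to the proofs of Propositions \ref{P2.1} and \ref{P2.3},'' and your generators-and-relations construction (relations (I) for morphism-hood, relations (II) obtained by expanding the $\c{M}_3$-identity in the bases of $A$ and $B$) together with the character correspondence is precisely that analogue. No gaps to report.
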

\begin{theorem}\label{T3.4}
Together with the assumptions of (4) suppose that $B$ and $N$ are free as module and $A$ is FR.
Then there is an $\c{M}_4$-universal family, denoted by ${\phi_4}:B\to A\ot {Z_4}$. Moreover,
there is a canonical one-to-one correspondence
between characters of ${Z_4}$ and morphisms $f:B\to A$ satisfying $(f\ot\i_N)\Lambda=\Theta f$.
\end{theorem}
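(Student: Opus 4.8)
The plan is to follow the pattern of Proposition \ref{P2.1}: start from the object it already produces and cut it down by the extra relations that the $\c{M}_4$-constraint forces. Since $A$ is FR, we have the $\c{M}(B,A)$-universal family $\phi:B\to A\ot Z$ with $Z=\f{M}(B,A)$. Fix a basis $\{a_i\}_{i=1}^n$ of $A$, a basis $\{b_j\}_{j\in J}$ of $B$ and a basis $\{n_k\}_{k\in K}$ of $N$. Then $\phi(b_j)=\sum_i a_i\ot z_{ji}$ for unique $z_{ji}\in Z$, and by Proposition \ref{P2.1}(a) these $z_{ji}$ generate $Z$ as an algebra. Expand $\Lambda$ and $\Theta$ in coordinates, $\Lambda(b_j)=\sum_{j',k}\lambda^{j'k}_j\,b_{j'}\ot n_k$ and $\Theta(a_i)=\sum_{i',k}\theta^{i'k}_i\,a_{i'}\ot n_k$, with scalars $\lambda^{j'k}_j,\theta^{i'k}_i\in\m{K}$ (only finitely many nonzero for each $j$ and each $i$).

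First I would turn the $\c{M}_4$-constraint into relations among the $z_{ji}$. For a family $\psi:B\to A\ot C$ with associated morphism $\bar{\psi}:Z\to C$ one has $\psi(b_j)=\sum_i a_i\ot\bar{\psi}(z_{ji})$; evaluating both sides of $(\i_A\ot F)(\Theta\ot\i_C)\psi=(\psi\ot\i_N)\Lambda$ at $b_j$, and using that $A$ and $N$ are free (so that $A\ot C\ot N\cong\bigoplus_{i,k}C$ canonically), one sees by comparing the $(i',k)$-components that $\psi\in\c{M}_4$ \emph{if and only if}
$$\sum_i\theta^{i'k}_i\,\bar{\psi}(z_{ji})=\sum_{j'}\lambda^{j'k}_j\,\bar{\psi}(z_{j'i'})\qquad(j\in J,\ 1\le i'\le n,\ k\in K).$$
Accordingly, let $\f{j}\subseteq Z$ be the two-sided ideal generated by the elements $r_{ji'k}:=\sum_i\theta^{i'k}_i z_{ji}-\sum_{j'}\lambda^{j'k}_j z_{j'i'}$, set $Z_4:=Z/\f{j}$ with quotient morphism $\pi:Z\to Z_4$, and put $\phi_4:=(\i_A\ot\pi)\phi$.

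Then I would check the two assertions. That $\phi_4\in\c{M}_4$ is immediate, since for $\psi=\phi_4$ the displayed relations read $\pi(r_{ji'k})=0$, which holds by construction. For universality, given $\psi\in\c{M}_4$ with parameter-algebra $C$, Proposition \ref{P2.1}(c) yields the unique $\bar{\psi}:Z\to C$ with $(\i_A\ot\bar{\psi})\phi=\psi$; the relations above say $\bar{\psi}(r_{ji'k})=0$ for all $j,i',k$, so $\bar{\psi}$ annihilates $\f{j}$ and factors uniquely through $\pi$ as $\bar{\psi}=\widehat{\psi}\,\pi$. This $\widehat{\psi}:Z_4\to C$ satisfies $(\i_A\ot\widehat{\psi})\phi_4=\psi$, and it is the only such morphism because $\pi$ is surjective and $\bar{\psi}$ is unique; hence $\phi_4$ is initial in $\c{M}_4$. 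For the character statement, a family over $\m{K}$ is just a morphism $f:B\to A$ (under $A\ot\m{K}\cong A$), the flip $F$ on $N\ot\m{K}$ is the canonical isomorphism, and so the $\c{M}_4$-constraint collapses to $(f\ot\i_N)\Lambda=\Theta f$; combining this with the factorization through $Z_4$ gives the claimed bijection between characters of $Z_4$ and morphisms $f:B\to A$ satisfying $(f\ot\i_N)\Lambda=\Theta f$.

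I expect the only real obstacle to be the bookkeeping in the translation step — keeping track of the flip $F$ and of which tensor slot each index sits in — together with the point that the coordinate identity must be shown \emph{equivalent} to, not merely implied by, the intrinsic constraint. That equivalence is precisely where the hypotheses enter: $B$ free so that a module map out of $B$ is determined by its values on $\{b_j\}$ and $\Lambda$ admits the coordinate expansion above, and $A$, $N$ free so that comparison of coefficients in $A\ot C\ot N$ is legitimate. Apart from this, the argument is formally identical to the proofs of Propositions \ref{P2.1} and \ref{P2.3} (and of Theorems \ref{T3.1}--\ref{T3.3}), so the routine verifications would be omitted.
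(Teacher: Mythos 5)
Your proof is correct and takes essentially the approach the paper intends: the paper omits the argument, declaring it analogous to Propositions \ref{P2.1} and \ref{P2.3}, and your construction — translating the $\c{M}_4$-constraint into coordinate relations on the generators $z_{ji}$ (legitimately, since $B$, $A$, $N$ are free, so checking on the module basis $\{b_j\}$ and comparing coefficients in $A\ot C\ot N$ gives an equivalence, not just an implication) and passing to the quotient $\f{M}(B,A)/\f{j}$ — is exactly the generators-and-relations presentation those proofs would produce. The verification of universality and of the character correspondence is also as the paper would have it, so there is nothing to add.
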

\begin{remark}\label{R3.1}
Under the assumptions of Theorem 4.i ($i=1,\cdots,4$), it follows from Lemma \ref{L2.0}
that there exists an $\c{M}^\r{c}_i$-universal family. Moreover, the parameter-algebra $Z_i^\r{c}$ of that family is
canonically isomorphic to $Z_i/[Z_i,Z_i]$. Also, if there is a morphism $f:B\to A$ which satisfies the condition of Theorem 4.i,
then $Z_i^\r{c}\neq0$.
\end{remark}
Let $\c{I}_i$ ($i=1,\cdots,4$) denote the subclass of $\c{M}_i$ containing invertible families.
\begin{theorem}\label{T3.5}
Suppose that $A$ and $B$ are FR and the assumptions of Theorem 4.i ($i=1,\cdots,4$) are satisfied. Then
$\c{I}_i$ and $\c{I}^\r{c}_i$ have universal families.
\end{theorem}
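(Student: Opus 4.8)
The plan is to merge two constructions already in hand: the universal family of isomorphisms of Proposition \ref{P2.3} and the device, used in Theorems \ref{T3.1}--\ref{T3.4}, of cutting a subclass of $\c{M}(B,A)$ out by imposing its defining relation as relations on the parameter-algebra. Concretely, since $A$ and $B$ are FR, Proposition \ref{P2.3} gives the $\c{I}(B,A)$-universal family $\f{i}(B,A):B\to A\ot\f{I}(B,A)$; I would then quotient $\f{I}(B,A)$ by a suitable ideal so that the pushed-forward family additionally satisfies the $\c{M}_i$-condition, and check that the result is $\c{I}_i$-universal. The point is that $\c{I}_i=\c{I}(B,A)\cap\c{M}_i$, so it suffices to force the $\c{M}_i$-relation on top of the already-universal invertible family.

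In detail: fix a basis $a_1,\cdots,a_m$ of $A$. For each $i$, the relation defining $\c{M}_i$, evaluated on $\f{i}(B,A)$, asserts the vanishing of a $\m{K}$-linear map $D_i$ whose source is a free $\m{K}$-module ($M$ for $i=1$; $B$, which is free by hypothesis, for $i=2,3,4$) and whose target is $U_i\ot\f{I}(B,A)$ with $U_i$ equal to $A$, $M'$, $A\ot A$, $A\ot N$ for $i=1,2,3,4$ (with the $\f{I}(B,A)$-factor sitting alongside, up to a flip). Here the freeness hypotheses of Theorem 4.$i$ are exactly what is used: $A$ is FR always, while $M'$ free ($i=2$) and $N$ free ($i=4$) let me pick a basis of $U_i$. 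Expanding $D_i$ in that basis yields a family $\{d_\lambda\}$ of elements of $\f{I}(B,A)$; let $J_i$ be the two-sided ideal they generate, put $Z_i:=\f{I}(B,A)/J_i$ with quotient map $\pi_i$, and set $\phi:=(\i_A\ot\pi_i)\,\f{i}(B,A):B\to A\ot Z_i$.

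Three things then need verification. First, $\phi$ is still invertible: if $\psi:B\to A\ot C$ is invertible with inverse $\psi^{-1}$ and $\varphi:C\to C'$ is any algebra morphism, then $(\i_A\ot\varphi)\psi$ is invertible with inverse $(\i_B\ot\varphi)\psi^{-1}$ — apply $\i\ot\varphi$ to the two identities of Definition \ref{D2.3} and use that $\varphi$ is unital and $\mu_{C'}(\varphi\ot\varphi)=\varphi\mu_C$; take $\varphi=\pi_i$. Second, $\phi\in\c{M}_i$: one checks that the obstruction is natural in the parameter-algebra, i.e. replacing $\f{i}(B,A)$ by $(\i_A\ot\varphi)\f{i}(B,A)$ replaces $D_i$ by $(\i_{U_i}\ot\varphi)D_i$ (for $i=1,2$ this uses only that $\varphi$ is linear and unital; for $i=3,4$ also that $\varphi$ is multiplicative, because of the $\mu_C$ in those relations); since $\pi_i$ annihilates every $d_\lambda$, it annihilates $(\i_{U_i}\ot\pi_i)D_i$, so $\phi$ satisfies the $\c{M}_i$-relation. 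Third, universality: given any $\psi:B\to A\ot C$ in $\c{I}_i$, it lies in $\c{I}(B,A)$, hence factors uniquely as $\psi=(\i_A\ot\bar\psi)\f{i}(B,A)$; since it lies in $\c{M}_i$, naturality of the obstruction gives $(\i_{U_i}\ot\bar\psi)D_i=0$, hence $\bar\psi(d_\lambda)=0$ for all $\lambda$, hence $\bar\psi$ kills $J_i$ and descends to a unique morphism $Z_i\to C$ realizing $\psi$ as a pushforward of $\phi$. Thus $\phi$ is $\c{I}_i$-universal. For $\c{I}_i^\r{c}$ one invokes Lemma \ref{L2.0} with $\c{C}=\c{I}_i$: the induced family over $Z_i/[Z_i,Z_i]$ is again invertible and again satisfies the $\c{M}_i$-relation by the two covariance facts just used, so it lies in $\c{I}_i$ and is therefore $\c{I}_i^\r{c}$-universal.

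I do not expect a real obstacle here; the construction is a routine merge of the proof of Proposition \ref{P2.3} with those of Theorems \ref{T3.1}--\ref{T3.4}. The one point that genuinely has to be checked — and the only thing that makes the quotient legitimate — is that both ``being invertible'' and ``lying in $\c{M}_i$'' are preserved under pushforward of families along an arbitrary algebra morphism of parameter-algebras. Equivalently, and perhaps in closer keeping with the paper's stated style, one may build $Z_i$ directly by generators and relations, adjoining to the presentation used in the proof of Proposition \ref{P2.3} the extra relations coming from the $\c{M}_i$-condition; the mild bookkeeping nuisance there is simply carrying the four different shapes of the $\c{M}_i$-relations through the presentation.
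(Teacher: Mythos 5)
Your construction is correct and is essentially the paper's own (omitted) argument: since the proof of Theorem \ref{T3.5} is declared analogous to those of Propositions \ref{P2.1} and \ref{P2.3} — i.e.\ one adjoins the $\c{M}_i$-relations to the presentation of $\f{I}(B,A)$ — your quotient $Z_i=\f{I}(B,A)/J_i$ is just that construction done in two stages, and your three verifications (stability of invertibility and of the $\c{M}_i$-condition under pushforward of the parameter-algebra, plus factorization of $\bar{\psi}$ through the quotient), together with Lemma \ref{L2.0} for $\c{I}_i^\r{c}$, are exactly the computations such a proof would contain. The only slip, asserting that $M$ is free in the case $i=1$, is harmless: your argument only ever needs freeness of the target module ($A$, $M'$, $A\ot A$, $A\ot N$) to read off the components $d_\lambda$, not freeness of the source, over which one may simply let the index run.
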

In the next sections, we show that the constructions achieved in this section may be endowed with Hopf-algebra structures in somewhat
natural manners.
\section{The action of $\f{M}(?,A)$ on Hopf-algebras}
This section is devoted to construction of a Hopf-algebra structure on parameter-algebras of families of morphisms
from FR commutative algebras to Hopf-algebras. Most of the results in this section are purely algebraic analogs of the results of
\cite{Soltan3} and \cite{Sadr1}. By using the fact that any commutative finite dimensional C*-algebra is isomorphic to a finite product
$\m{C}\oplus\cdots\oplus\m{C}$, So{\l}tan in \cite{Soltan3} has showed that this construction in the C*-case is a special case of the Wang
free product of compact quantum groups \cite{Wang2}, see also Remark \ref{R4.1} below.

Let $A$ be a FR commutative algebra and $B$ be an arbitrary algebra.  Let $\phi=\f{m}(B,A)$ and
$C=\f{M}(B,A)$. Suppose that $\Delta$ is a comultiplication on $B$
and let $\bar{\Delta}:C\to C\ot C$ be the composition of $\f{M}(\Delta)$ with the canonical morphism
from $\f{M}(B\ot B,A)$ to $C\ot C$ (see Remark \ref{R2.3}). More explicitly, $\bar{\Delta}$ is the
unique morphism that makes the following diagram commutative.
\begin{equation*}\label{d1}
\xymatrix{B\ar[rr]_-{\phi}\ar[d]^{\Delta}&& A\ot C\ar[d]_{\i_A{\ot}\bar{\Delta}}\\
B\ot B\ar[d]^{\phi{\ot}\phi}&& A\ot (C\ot C)\\
A\ot C\ot A\ot C\ar[rr]^-{\i_A\ot F\ot \i_C}&&(A\ot A)\ot (C\ot C)\ar[u]^{\mu_A{\ot} \i_{C{\ot} C}}}
\end{equation*}
\begin{lemma}\label{L5.1}
$\bar{\Delta}$ is a comultiplication on $C$.
\end{lemma}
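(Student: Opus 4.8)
The plan is to verify coassociativity of $\bar\Delta$, i.e. $(\bar\Delta\ot\i_C)\bar\Delta=(\i_C\ot\bar\Delta)\bar\Delta$, by exploiting the universal property of $C=\f{M}(B,A)$: two morphisms out of $C$ agree precisely when their precompositions-via-$\phi$ (in the appropriate tensored sense) agree. More precisely, for a morphism $g:C\to C\ot C\ot C$, the datum $(\i_A\ot g)\phi:B\to A\ot(C\ot C\ot C)$ is a family, and $g$ is the unique morphism inducing that family; so it suffices to show that $(\i_A\ot(\bar\Delta\ot\i_C)\bar\Delta)\phi$ and $(\i_A\ot(\i_C\ot\bar\Delta)\bar\Delta)\phi$ are the same family of morphisms $B\to A\ot C^{\ot3}$.

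First I would record the defining identity of $\bar\Delta$ from the displayed diagram in a usable algebraic form, namely
$$(\i_A\ot\bar\Delta)\phi=(\mu_A\ot\i_{C\ot C})(\i_A\ot F\ot\i_C)(\phi\ot\phi)\Delta,$$
and similarly for $\bar\Delta$ on the other factor. Then I would compute $(\i_A\ot(\bar\Delta\ot\i_C)\bar\Delta)\phi$ by substituting this identity into itself once: apply $\bar\Delta$ to the first $C$-leg of $(\i_A\ot\bar\Delta)\phi$, which by the defining identity replaces that leg using another copy of $(\phi\ot\phi)\Delta$ together with a flip and a $\mu_A$. The upshot, after reorganizing the $A$-legs, is an expression of the form $(\mu_A^{(2)}\ot\i_{C^{\ot3}})\circ(\text{flips})\circ(\phi\ot\phi\ot\phi)\circ(\Delta\ot\i_B)\Delta$, where $\mu_A^{(2)}$ is the iterated (two-fold) multiplication on $A$. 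The symmetric computation for $(\i_C\ot\bar\Delta)\bar\Delta$ produces the analogous expression but with $(\i_B\ot\Delta)\Delta$ in place of $(\Delta\ot\i_B)\Delta$, and with the $A$-multiplications and flips associated the other way.

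The two expressions then coincide because of three facts: coassociativity of $\Delta$ on $B$ (this is where the hypothesis that $\Delta$ is a comultiplication enters), associativity of $\mu_A$ (so the two bracketings of $\mu_A^{(2)}$ agree — this is available for any algebra, and commutativity of $A$ is what makes $\mu_A$ a morphism, which is needed for $\bar\Delta$ to be well defined in the first place), and the naturality/coherence of the flip maps $F$ on tensor powers of $\m{K}$-modules (so the two patterns of shuffling the $A$- and $C$-legs into the order $A\ot A\ot A\ot C\ot C\ot C$ agree). Matching up the flips carefully is the one genuinely fiddly point; I would handle it by tracking the position of each tensor factor through both computations rather than by manipulating $F$ symbolically. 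I expect this bookkeeping of the permutations to be the main obstacle — not conceptually deep, but the step most prone to sign-of-permutation-style slips — whereas the appeal to coassociativity of $\Delta$ and associativity of $\mu_A$ is immediate once the two sides are written in the normal form above. Finally, the uniqueness clause of the universal property of $C$ (Proposition \ref{P2.1}(c)) upgrades the equality of the two induced families to the desired equality $(\bar\Delta\ot\i_C)\bar\Delta=(\i_C\ot\bar\Delta)\bar\Delta$, completing the proof that $\bar\Delta$ is a comultiplication.
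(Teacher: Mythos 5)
Your proposal is correct and follows essentially the same route as the paper: invoke the universal property of $C=\f{M}(B,A)$, substitute the defining identity $(\i_A\ot\bar\Delta)\phi=(\mu_A\ot\i_{C\ot C})F(\phi\ot\phi)\Delta$ into itself on each side, reduce both to expressions of the form (multiplications and flips)$\circ(\phi\ot\phi\ot\phi)\circ(\Delta\ot\i_B)\Delta$ resp. $(\i_B\ot\Delta)\Delta$, check the prefix morphisms agree (the flip bookkeeping plus associativity of $\mu_A$), and finish with coassociativity of $\Delta$. This matches the paper's proof of Lemma \ref{L5.1} step for step, including its identification of the flip-matching as the only fiddly point.
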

\begin{proof}
By the universality of $C$, it is enough to show that equality
$(\i_A{\ot}\i_C{\ot}\bar{\Delta}) (\i_A{\ot}\bar{\Delta}) \phi=(\i_A{\ot}\bar{\Delta}{\ot}\i_C) (\i_A{\ot}\bar{\Delta}) \phi$ is satisfied.
We have,
\begin{equation}\label{E5.1}
\begin{split}
(\i_A{\ot}\i_C{\ot}\bar{\Delta}) (\i_A{\ot}\bar{\Delta}) \phi
=&(\i_A{\ot}\i_C{\ot}\bar{\Delta}) (\mu_A{\ot} \i_{C{\ot} C})F(\phi{\ot}\phi) \Delta\\
=&(\mu_A{\ot}\i_C{\ot}\bar{\Delta}) F (\phi{\ot}\phi) \Delta\\
=&(\mu_A{\ot}\i_{C{\ot}C{\ot}C})F(\i_{A{\ot}C{\ot}A}{\ot}\bar{\Delta})(\phi{\ot}\phi)\Delta\\
=&(\mu_A{\ot}\i_{C{\ot}C{\ot}C})F(\i_{A{\ot}C{\ot}A}{\ot}\bar{\Delta})(\phi{\ot}\i_{A{\ot}C})(\i_B{\ot}\phi)\Delta\\
=&(\mu_A{\ot}\i_{C{\ot}C{\ot}C})F(\phi{\ot}\i_{A{\ot}C{\ot}C})(\i_B{\ot}\i_A{\ot}\bar{\Delta})(\i_B{\ot}\phi)\Delta\\
=&(\mu_A{\ot}\i_{C{\ot}C{\ot}C})F(\phi{\ot}\i_A{\ot}\bar{\Delta})(\i_B{\ot}\phi)\Delta\\
=&(\mu_A{\ot}\i_{C{\ot}C{\ot}C})F(\phi{\ot}[(\i_A{\ot}\bar{\Delta})\phi])\Delta\\
=&(\mu_A{\ot}\i_{C{\ot}C{\ot}C})F(\phi{\ot}[(\mu_A{\ot} \i_{C{\ot} C})F (\phi{\ot}\phi) \Delta])\Delta\\
=&(\mu_A{\ot}\i_{C{\ot}C{\ot}C})F(\i_{A{\ot}C}{\ot}\mu_A{\ot} \i_{C{\ot} C})F(\phi{\ot}\phi{\ot}\phi)(\i_B{\ot}\Delta)\Delta\\
\end{split}
\end{equation}
\begin{equation}\label{E5.2}
\begin{split}
(\i_A{\ot}\bar{\Delta}{\ot}\i_C) (\i_A{\ot}\bar{\Delta}) \phi
=&(\i_A{\ot}\bar{\Delta}{\ot}\i_C)(\mu_A{\ot} \i_{C{\ot} C})F(\phi{\ot}\phi) \Delta\\
=&(\mu_A{\ot}\bar{\Delta}{\ot}\i_C)F(\phi{\ot}\phi) \Delta\\
=&(\mu_A{\ot}\i_{C{\ot}C{\ot}C})F(\i_A{\ot}\bar{\Delta}{\ot}\i_A{\ot}\i_C)(\phi{\ot}\phi) \Delta\\
=&(\mu_A{\ot}\i_{C{\ot}C{\ot}C})F([(\i_A{\ot}\bar{\Delta}) \phi]{\ot}\phi)\Delta\\
=&(\mu_A{\ot}\i_{C{\ot}C{\ot}C})F([(\mu_A{\ot} \i_{C{\ot} C})F(\phi{\ot}\phi) \Delta]{\ot}\phi)\Delta\\
=&(\mu_A{\ot}\i_{C{\ot}C{\ot}C})F(\mu_A{\ot}\i_{C{\ot}C{\ot}A{\ot}C})F(\phi{\ot}\phi{\ot}\phi)(\Delta{\ot}\i_B)\Delta\\
\end{split}
\end{equation}
It is easily checked that the morphisms
$$(\mu_A{\ot}\i_{C{\ot}C{\ot}C})F(\i_{A{\ot}C}{\ot}\mu_A{\ot} \i_{C{\ot} C})F\hspace{3mm}\text{ and }\hspace{3mm}
(\mu_A{\ot}\i_{C{\ot}C{\ot}C})F(\mu_A{\ot}\i_{C{\ot}C{\ot}A{\ot}C})F$$
appeared respectively in the last rows of (\ref{E5.1}) and of (\ref{E5.2})  are equal.
Now, we get the result by using the identity $(\i_B{\ot}\Delta)\Delta=(\Delta{\ot}\i_B)\Delta$.
\end{proof}
\begin{lemma}\label{L5.2}
$\bar\Delta$ is cocommutative if $\Delta$ is.
\end{lemma}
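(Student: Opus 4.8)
The plan is to exploit the universal property of $C=\f{M}(B,A)$ exactly as in the proof of Lemma \ref{L5.1}. Recall that $\bar\Delta$ is the unique morphism $C\to C\ot C$ satisfying $(\i_A\ot\bar\Delta)\phi=(\mu_A\ot\i_{C\ot C})(\i_A\ot F\ot\i_C)(\phi\ot\phi)\Delta$. Writing $F$ also for the flip of the two factors of $C\ot C$, the claim $F\bar\Delta=\bar\Delta$ will follow once I check that $F\bar\Delta$ satisfies the very same defining identity, that is
$$(\i_A\ot F\bar\Delta)\phi=(\mu_A\ot\i_{C\ot C})(\i_A\ot F\ot\i_C)(\phi\ot\phi)\Delta,$$
since then the uniqueness clause of the universal property forces $F\bar\Delta=\bar\Delta$.

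To verify this identity I would start from $(\i_A\ot F\bar\Delta)\phi=(\i_A\ot F)(\mu_A\ot\i_{C\ot C})(\i_A\ot F\ot\i_C)(\phi\ot\phi)\Delta$ and commute the flip on the last two tensor legs past $\mu_A\ot\i_{C\ot C}$ — legitimate since these morphisms act on disjoint groups of tensor factors — obtaining $(\mu_A\ot\i_{C\ot C})(\i_{A\ot A}\ot F)(\i_A\ot F\ot\i_C)(\phi\ot\phi)\Delta$. On the other side I would instead use cocommutativity of $\Delta$, i.e. $\Delta=F\Delta$, together with $(\phi\ot\phi)F=F(\phi\ot\phi)$, where the latter flip exchanges the two $A\ot C$ blocks of $A\ot C\ot A\ot C$, to rewrite the right-hand side as $(\mu_A\ot\i_{C\ot C})(\i_A\ot F\ot\i_C)F(\phi\ot\phi)\Delta$. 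Now both sides are (some fixed permutation of the four tensor legs $A,C,A,C$) followed by $\mu_A\ot\i_{C\ot C}$, applied to $(\phi\ot\phi)\Delta$; tracking the legs through the two permutations shows they land in $A\ot A\ot C\ot C$ with the same $C\ot C$ tail but the two $A$-legs in opposite orders. Since $A$ is commutative, $\mu_A F=\mu_A$ absorbs exactly this discrepancy, so the two composites coincide, proving the displayed identity and hence the lemma.

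I do not expect a genuine obstacle: the argument is formal, the only care needed being the bookkeeping of the several flip homomorphisms $F_\sigma$ on the fourfold tensor product $A\ot C\ot A\ot C$. Just as the coassociativity verification in Lemma \ref{L5.1} ultimately reduced to coassociativity of $\Delta$, here everything reduces to cocommutativity of $\Delta$ together with commutativity of $A$.
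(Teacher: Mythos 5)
Your argument is correct and is exactly the ``straightforward'' verification the paper has in mind: use the uniqueness clause of the universal property of $\f{M}(B,A)$ (as in Lemmas \ref{L5.1} and \ref{L5.3}) to reduce $F\bar\Delta=\bar\Delta$ to an identity after composing with $\phi$, which then follows from $\Delta=F\Delta$ together with commutativity of $A$ via $\mu_AF=\mu_A$. The flip bookkeeping you describe checks out, so nothing further is needed.
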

\begin{proof}
Straightforward.
\end{proof}
Suppose that $\epsilon:B\to\m{K}$ is a left counit for $B$.
Let $\tilde{\epsilon}:B\to A$ be defined by $b\mapsto\epsilon(b)1$
and let $\bar{\epsilon}$ be the unique character of $C$ satisfying $(\i_A\ot\bar{\epsilon})\phi=\tilde{\epsilon}$
(indeed, using notations of Theorem \ref{T2.1}, $\bar{\epsilon}=\f{M}(\epsilon)$).
\begin{lemma}\label{L5.3}
$\bar{\epsilon}$ is a left counit on $C$.
\end{lemma}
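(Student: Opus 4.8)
The plan is to use the universal property of $C = \f{M}(B,A)$, exactly as in Lemma \ref{L5.1}: to verify that $\bar\epsilon$ is a left counit, i.e.\ that $(\bar\epsilon\ot\i_C)\bar\Delta = \i_C$, it suffices by the universality of $C$ to check that $(\i_A\ot\bar\epsilon\ot\i_C)(\i_A\ot\bar\Delta)\phi = (\i_A\ot\i_C)\phi = \phi$, since two morphisms out of $C$ that agree after precomposition with $\phi$ (in the appropriate sense) must coincide. So the whole proof reduces to a diagram chase at the level of $B$.

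First I would expand $(\i_A\ot\bar\Delta)\phi$ using its defining formula, namely $(\i_A\ot\bar\Delta)\phi = (\mu_A\ot\i_{C\ot C})F(\phi\ot\phi)\Delta$. Applying $\i_A\ot\bar\epsilon\ot\i_C$ to this and pushing the $\bar\epsilon$ past the flip $F$ and the $\mu_A$ (which it commutes with, being a morphism of algebras into $\m{K}$), I expect to reach an expression of the form $(\mu_A\ot\i_C)(\i_A\ot\tilde\epsilon\ot\i_C)\,\text{(flip)}\,(\phi\ot\phi)\Delta$, where I have used $(\i_A\ot\bar\epsilon)\phi = \tilde\epsilon$. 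Since $A$ is commutative, $\mu_A$ is a morphism, and $\tilde\epsilon(b) = \epsilon(b)1_A$, the middle factor collapses: $\mu_A(\i_A\ot\tilde\epsilon)$ applied to $\phi(b')\ot b''$ (with $\Delta b = \sum b'\ot b''$) simply multiplies $\phi(b')$ by the scalar $\epsilon(b'')$. Summing over the Sweedler components, this is $\phi\big((\i_B\ot\epsilon)\Delta(b)\big)$, and then the left counit identity $(\i_B\ot\epsilon)\Delta = \i_B$ for $B$ finishes it, giving $\phi(b)$ as required.

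The one point that needs a little care — and the place I'd expect to be the main obstacle — is bookkeeping the flip maps $F$ so that the evaluation $\bar\epsilon$ really does land on the correct tensor leg, namely the $C$-leg coming from the \emph{second} copy of $\phi$ (the one fed by the right Sweedler leg of $\Delta$), matching the convention $\bar\Delta = $ (composite through $\f{M}(B\ot B,A)$). Once the legs are correctly tracked, commutativity of $A$ is what lets $\mu_A$ absorb the scalar $\epsilon(b'')$ cleanly; without it one could not slide $\tilde\epsilon$ through $\mu_A$. I would also remark that the analogous computation with the roles reversed shows $(\i_C\ot\bar\epsilon)\bar\Delta = \i_C$ when $\epsilon$ is a right counit, so if $\epsilon$ is a two-sided counit on $B$ then $\bar\epsilon$ is a two-sided counit on $C$; but the stated lemma only claims the left-handed version, which is exactly the computation above.
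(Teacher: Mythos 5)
Your strategy is exactly the paper's: reduce, via the universal property of $C$, to the identity $(\i_A\ot[(\bar{\epsilon}\ot\i_C)\bar{\Delta}])\phi=\phi$, expand $(\i_A\ot\bar{\Delta})\phi=(\mu_A\ot\i_{C\ot C})(\i_A\ot F\ot\i_C)(\phi\ot\phi)\Delta$, use $(\i_A\ot\bar{\epsilon})\phi=\tilde{\epsilon}$, and finish with a counit identity on $B$. However, at the one point you yourself flag as the crux --- which tensor leg $\bar{\epsilon}$ hits --- you have it backwards. In $(\phi\ot\phi)\Delta(b)$ the legs are ordered $A^{(1)}\ot C^{(1)}\ot A^{(2)}\ot C^{(2)}$; the flip $\i_A\ot F\ot\i_C$ reorders them to $A^{(1)}\ot A^{(2)}\ot C^{(1)}\ot C^{(2)}$ and $\mu_A$ contracts the two $A$-legs, so the \emph{first} $C$-factor of $\bar{\Delta}$ comes from the \emph{first} copy of $\phi$, i.e.\ is fed by the left Sweedler leg $b_{(1)}$. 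Consequently $(\i_A\ot\bar{\epsilon}\ot\i_C)$ turns $(\i_A\ot\bar{\Delta})\phi(b)$ into $\sum\epsilon(b_{(1)})\,\phi(b_{(2)})=\phi\big((\epsilon\ot\i_B)\Delta(b)\big)$, and the identity needed on $B$ is $(\epsilon\ot\i_B)\Delta=\i_B$ --- precisely the left-counit hypothesis in the paper's usage --- not $(\i_B\ot\epsilon)\Delta=\i_B$ as you write. As written your argument is internally inconsistent (you read ``left counit on $C$'' as $(\bar{\epsilon}\ot\i_C)\bar{\Delta}=\i_C$ but call $(\i_B\ot\epsilon)\Delta=\i_B$ ``the left counit identity'' on $B$), and if $\epsilon$ were only a left counit the final step you invoke would not be available. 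The fix is pure bookkeeping: swap the two legs, after which your computation is the paper's proof verbatim, and your closing remark should likewise read that the reversed computation gives $(\i_C\ot\bar{\epsilon})\bar{\Delta}=\i_C$ from the right-counit identity.

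A smaller point: commutativity of $A$ is not what lets $\mu_A$ absorb the factor $\epsilon(b_{(1)})1_A$ --- multiplying by a scalar multiple of $1_A$ requires no commutativity. Where commutativity of $A$ actually enters is earlier, in making $\mu_A$ an algebra morphism, so that $(\mu_A\ot\i_{C\ot C})(\i_A\ot F\ot\i_C)(\phi\ot\phi)\Delta$ is a morphism $B\to A\ot(C\ot C)$ and the universal property of $C$ can be applied to define $\bar{\Delta}$ and to cancel it against $\phi$ in the reduction step.
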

\begin{proof}
By the universal property of $C$ it is enough to show that
$(\i_A{\ot}[(\bar{\epsilon}\ot\i_C)\bar{\Delta}])\phi=\phi$. For every $b\in B$ we have,
\begin{equation*}
\begin{split}
(\i_A{\ot}[(\bar{\epsilon}\ot\i_C)\bar{\Delta}])\phi(b)=&(\i_A{\ot}\bar{\epsilon}{\ot}\i_C) (\i_A{\ot}\bar{\Delta})\phi(b)\\
=&(\i_A{\ot}\bar{\epsilon}{\ot}\i_C) (m{\ot}\i_{C{\ot}C}) (\i_A{\ot}F{\ot}\i_{C}) (\phi{\ot}\phi) \Delta(b)\\
=&(\mu_A{\ot}\bar{\epsilon}{\ot}\i_C) (\i_A{\ot}F{\ot}\i_{C}) (\phi{\ot}\phi) \Delta(b)\\
=&(\mu_A{\ot}\i_C) (\i_A{\ot}\bar{\epsilon}{\ot}\i_{A}{\ot}\i_C) (\phi{\ot}\phi) \Delta(b)\\
=&(\mu_A{\ot}\i_C) ([(\i_A\ot\bar{\epsilon})\phi]\ot\phi) \Delta(b)\\
=&(\mu_A{\ot}\i_C) (\tilde{\epsilon}\ot\phi) \Delta(b)\\
=&(\mu_A{\ot}\i_C) (\i_A\ot\phi) (\tilde{\epsilon}\ot\i_B) \Delta(b)\\
=&(\mu_A{\ot}\i_C) (\i_A\ot\phi)(1_A\ot b)\\
=&\phi(b).
\end{split}
\end{equation*}
\end{proof}
Analogue of the above result is satisfied for a right counit. In particular, if $\epsilon$ is a counit for $B$ then $\bar{\epsilon}$
is a counit for $C$.
\begin{remark}
Let $B$ and $D$ be algebras such that $D$ is commutative. Then any bialgebra structure $(B,\Delta,\epsilon)$ induces
a monoidal structure (\cite[Chapter 9]{Majid1}) $\hat{\ot}$ on the category $\c{M}(B,D)$ as follows.
For any two families $\psi_1:B\to D\ot E_1$ and $\psi_2:B\to D\ot E_2$ of morphisms from $B$ to $D$ let
$\psi_1\hat{\ot}\psi_2=(\mu_{D}\ot\i_{E_1\ot E_2})(\i_{D}\ot F\ot\i_{E_2})(\psi_1\ot\psi_2)\Delta$.
Associativity of the bifunctor $\hat{\ot}$ follows from coassociativity of $\Delta$, and the unit object of the monoidal structure
is a family $B\to D$ defined by $b\mapsto\epsilon(b)1$. If $\Delta$ is cocommutative then $\hat{\ot}$ is a symmetric
monoidal structure. Moreover, suppose that $B$ is a cocommutative Hopf-algebra. Then for any character $\xi$ of $B$, the family
$B\to D$, defined by $b\mapsto\xi(b)1$, is an invertible object of $\c{M}(B,D)$. Also, any invertible object is of this form.
It follows that the Picard group (\cite{HoveyPalmieriStrickland1},\cite{May1})
of $\c{M}(B,D)$ is isomorphic to the group of characters of $B$ (with group operation induced by $\Delta$).
\end{remark}
Suppose now that $(B,\Delta,\epsilon,S)$ is a Hopf-algebra. Let $\bar{S}=\f{M}(S):C\to C^\r{op}$, where $C^\r{op}$ is
identified with $\f{M}(B^\r{op},A)$ (see Lemma \ref{L2.1}). We shall show that $\bar{S}$ is an antipode for
the bialgebra $(C,\bar{\Delta},\bar{\epsilon})$. Suppose $b\in B$ and $f:A\to\m{K}$ is a $\m{K}$-module homomorphism. Let
$c=(f\ot\i_C)\phi(b)$, and note that, by Proposition \ref{P2.1} (a), such elements generate $C$. Then,
\begin{equation*}
\begin{split}
\mu_C(\i_C\ot\bar{S})\bar{\Delta}(c)&=\mu_C(\i_C\ot\bar{S})\bar{\Delta}(f\ot\i_C)\phi(b)\\
&=(f\ot\mu_C)(\i_A\ot\i_C\ot\bar{S})(\i_A\ot\bar{\Delta})\phi(b)\\
&=(f\ot\mu_C)(\i_A\ot\i_C\ot\bar{S})(\mu_A\ot\i_{C\ot C})(\i_A\ot F\ot\i_C)(\phi\ot\phi)\Delta(b)\\
&=(f\mu_A\ot\mu_C)(\i_A\ot\i_C\ot\bar{S})(\phi\ot\phi)\Delta(b)\\
&=(f\mu_A\ot\mu_C)(\i_A\ot F\ot\i_C)(\i_{A\ot C}\ot\i_A\ot\bar{S})(\phi\ot\phi)\Delta(b)\\
&=(f\mu_A\ot\mu_C)(\i_A\ot F\ot\i_C)(\phi\ot[(\i_A\ot\bar{S})\phi])\Delta(b)\\
&=(f\mu_A\ot\mu_C)(\i_A\ot F\ot\i_C)(\phi\ot\phi S)\Delta(b)\\
&=(f\mu_A\ot\mu_C)(\i_A\ot F\ot\i_C)(\phi\ot\phi)(\i_B\ot S)\Delta(b)\\
&=(f\ot\i_C)\mu_{A\ot C}(\phi\ot\phi)(\i_B\ot S)\Delta(b)\\
&=(f\ot\i_C)\phi\mu_B(\i_B\ot S)\Delta(b)\\
&=(f\ot\i_C)\phi(\epsilon(b)1_B)\\
&=\epsilon(b)f(1_A)1_C\\
&=\bar{\epsilon}(c)1_C.
\end{split}
\end{equation*}
Analogously, we have $\mu_C(\bar{S}\ot\i_C)\bar{\Delta}(c)=\bar{\epsilon}(c)1_C$. Thus $\bar{S}$ is an antipode for $C$.
We have proved the following theorem.
\begin{theorem}\label{T5.1}
Let $A$ be a FR commutative algebra and let $B$ be a (cocommutative) bialgebra. Then,
$\f{M}(B,A)$ has a canonical (cocommutative) bialgebra structure. Moreover, $\f{M}(B,A)$ is a Hopf-algebra if $B$ is.
\end{theorem}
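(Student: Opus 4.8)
The plan is to assemble the constructions and lemmas already in place and observe that they are exactly the data of a (Hopf-)bialgebra. Write $\phi=\f{m}(B,A)$ and $C=\f{M}(B,A)$. Given the bialgebra data $(\Delta,\epsilon)$ on $B$, take as candidate comultiplication the morphism $\bar\Delta:C\to C\ot C$ defined by the universal diagram preceding Lemma \ref{L5.1} (equivalently, $\f{M}(\Delta)$ composed with the canonical morphism $\f{M}(B\ot B,A)\to C\ot C$ of Remark \ref{R2.3}), and as candidate counit $\bar\epsilon=\f{M}(\epsilon):C\to\m{K}$. Coassociativity of $\bar\Delta$ is Lemma \ref{L5.1}; that $\bar\epsilon$ is a two-sided counit is Lemma \ref{L5.3} together with its evident right-handed counterpart; hence $(C,\bar\Delta,\bar\epsilon)$ is a bialgebra. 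If $\Delta$ is cocommutative, Lemma \ref{L5.2} gives cocommutativity of $\bar\Delta$. This settles the bialgebra assertions, and records that the whole structure is determined functorially by that of $B$, which is the sense in which it is "canonical".

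For the Hopf-algebra statement, suppose $B$ carries an antipode $S:B\to B^\r{op}$. Using the identification $C^\r{op}\cong\f{M}(B^\r{op},A)$ from Lemma \ref{L2.1}, set $\bar S:=\f{M}(S):C\to C^\r{op}$; by functoriality this is an algebra anti-homomorphism of $C$. It remains to verify the two antipode identities $\mu_C(\i_C\ot\bar S)\bar\Delta=\bar\epsilon(\cdot)1_C=\mu_C(\bar S\ot\i_C)\bar\Delta$. By Proposition \ref{P2.1}(a), $C$ is generated as an algebra by the elements $c=(f\ot\i_C)\phi(b)$ with $b\in B$ and $f:A\to\m{K}$ a module homomorphism, so I would check both identities on such generators via the displayed chain of equalities, whose crux is feeding the antipode axiom $\mu_B(\i_B\ot S)\Delta=\epsilon(\cdot)1_B$ of $B$ through the morphism $\phi$. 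Passing from generators to all of $C$ is legitimate precisely because $\bar S$ is an anti-homomorphism: writing $\bar\Delta(c)=\sum c_1\ot c_2$, one has $\sum (cd)_1\bar S((cd)_2)=\sum c_1d_1\bar S(d_2)\bar S(c_2)=\bar\epsilon(d)\sum c_1\bar S(c_2)$, so the set of $c$ satisfying the identities is a subalgebra of $C$; likewise on the other side.

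I expect the only genuinely laborious step in this development to be the coassociativity verification (Lemma \ref{L5.1}): reduced by the universal property of $C$ to a single identity pulled back along $\phi$, it is a bookkeeping exercise with iterated flips $F$ and the associativity of $\mu_A$, closed at the end using $(\i_B\ot\Delta)\Delta=(\Delta\ot\i_B)\Delta$. Granting Lemmas \ref{L5.1}--\ref{L5.3}, Theorem \ref{T5.1} itself presents no further obstacle, as the counit and antipode checks both follow the uniform template "reduce to an identity about $\phi$, then invoke the corresponding axiom of $B$", and the cocommutative and Hopf refinements are immediate.
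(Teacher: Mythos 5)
Your proposal is correct and follows essentially the same route as the paper: Lemmas \ref{L5.1}--\ref{L5.3} give the (cocommutative) bialgebra structure, and the antipode $\bar S=\f{M}(S)$ via Lemma \ref{L2.1} is verified on the generators $(f\ot\i_C)\phi(b)$ of Proposition \ref{P2.1}(a) exactly as in the text. Your subalgebra argument for passing from generators to all of $C$ is a welcome explicit justification of a step the paper leaves implicit.
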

Analogue of this result is satisfied for $\f{M}^\r{c}$:
\begin{theorem}\label{T5.2}
Let $A$ be a FR commutative algebra and let $B$ be a Hopf-algebra (resp. bialgebra).
Then $\f{M}^\r{c}(B,A)$ has a canonical Hopf-algebra (resp. bialgebra) structure.
\end{theorem}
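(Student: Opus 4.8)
The plan is to realize $\f{M}^\r{c}(B,A)$ as a quotient of the bialgebra $\f{M}(B,A)$ produced in Theorem \ref{T5.1} and to transport the structure along the quotient map, rather than rerunning the diagram chases of this section verbatim. Write $C=\f{M}(B,A)$ and $C^\r{c}=\f{M}^\r{c}(B,A)$; by Remark \ref{R2.1} the latter is canonically $C/[C,C]$, with canonical projection $\pi\colon C\to C^\r{c}$. By Theorem \ref{T5.1}, $C$ carries a comultiplication $\bar\Delta$ and counit $\bar\epsilon$, and, when $B$ is a Hopf-algebra, an antipode $\bar S=\f{M}(S)$. So it will be enough to show that $[C,C]$ is a biideal --- and, in the Hopf case, a Hopf ideal --- since then the usual quotient construction endows $C/[C,C]$ with a bialgebra (resp.\ Hopf-algebra) structure for which $\pi$ is a morphism.

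First I would record the easy parts: $[C,C]$ is an ideal by definition, and $\bar\epsilon([C,C])=0$ because $\bar\epsilon$ is a character valued in the commutative ring $\m{K}$. The one genuine computation is that $[C,C]$ is a coideal. Here I would use that $\bar\Delta$ is an algebra morphism, so $\bar\Delta([c,c'])=[\bar\Delta(c),\bar\Delta(c')]$ in $C\ot C$, together with the elementary identity
\[ [a\ot b,\,a'\ot b']=[a,a']\ot bb'+a'a\ot[b,b'] \qquad(a,a',b,b'\in C), \]
which shows $[C\ot C,\,C\ot C]\subseteq[C,C]\ot C+C\ot[C,C]$; combining these gives $\bar\Delta([C,C])\subseteq[C,C]\ot C+C\ot[C,C]$, as required. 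In the Hopf case I would further observe that $\bar S\colon C\to C^\r{op}$ being an algebra morphism forces $\bar S([c,c'])=-[\bar S(c),\bar S(c')]\in[C,C]$, so $[C,C]$ is stable under $\bar S$. Coassociativity, the counit identities, and (in the Hopf case) the antipode identities for the induced structure on $C^\r{c}$ then follow from the corresponding facts on $C$ (Lemmas \ref{L5.1} and \ref{L5.3}, together with the antipode computation preceding Theorem \ref{T5.1}) simply by precomposing with the surjections $\pi$, $\pi\ot\pi$, $\pi\ot\pi\ot\pi$.

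Finally, I would check that the resulting structure is the \emph{canonical} one, i.e.\ that it coincides with what the constructions of this section give when $\f{M}^\r{c}$ replaces $\f{M}$ throughout (with Theorem \ref{T2.2} and the $\f{M}^\r{c}$-analogue of Lemma \ref{L2.1} playing the roles of Remark \ref{R2.3} and Lemma \ref{L2.1}). This is immediate from the universal property of $\f{M}^\r{c}(B,A)$ recorded in Remark \ref{R2.1}: both descriptions of the comultiplication on $C^\r{c}$ are the unique morphism compatible with $\f{m}^\r{c}(B,A)$ and $\Delta$ in the sense of the defining diagram of $\bar\Delta$, and similarly for the counit and antipode. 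I do not expect any real obstacle here: the whole content is the coideal inclusion displayed above, which amounts to the general fact that the commutator ideal of any bialgebra is automatically a biideal (and a Hopf ideal in the presence of an antipode).
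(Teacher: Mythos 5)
Your proof is correct and coincides with one of the paper's own arguments: the paper explicitly notes, as its second method, that $\f{M}^\r{c}(B,A)$ is the quotient of the bialgebra $\f{M}(B,A)$ of Theorem \ref{T5.1} by its commutator ideal and that this ideal is a biideal (resp.\ Hopf ideal), which is exactly what your computation with $[a\ot b,\,a'\ot b']$ and $\bar S$ makes explicit. (The paper's leading proof is instead the representability argument --- $\f{M}^\r{c}(B,A)$ represents the group/monoid-valued functor $\b{Alg}(B,A\ot{?})$ on commutative algebras --- but since your route is one the paper itself records, no further comparison is needed.)
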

\begin{proof}
For every commutative algebra $D$, the set $\b{Alg}(B,A\ot D)$
has a canonical group (resp. monoid) structure and thus the functor $\b{Alg}(B,A\ot ?)$ is group (resp. monoid) valued.
By the analog of Proposition \ref{P2.1} (d) for $\f{M}^\r{c}$, the algebra $\f{M}^\r{c}(B,A)$ represents $\b{Alg}(B,A\ot ?)$, and thus
has a canonical Hopf-algebra (resp. bialgebra) structure (see \cite[Chapter I]{Milne1}).

The theorem can of course be proved by another method:
By Theorem \ref{T5.1}, $\f{M}(B,A)$ is a Hopf-algebra (resp. bialgebra). On the other hand, the algebra $\f{M}^\r{c}(B,A)$ is the quotient
of $\f{M}(B,A)$ by the commutator ideal. Since the commutator ideal is also a Hopf-ideal (resp. biideal) it follows that $\f{M}^\r{c}(B,A)$
is a quotient Hopf-algebra (resp. bialgebra).

In the case that $B$ is commutative, there is still another proof:
$B$ is a group (resp. monoid) object in $\b{Alg}^\r{c}$. By Theorem \ref{T2.2}, $\f{M}^\r{c}(?,A)$ transforms coproducts to coproducts
and thus transforms group (resp. monoid) objects to group (resp. monoid) objects.
\end{proof}
\begin{remark}\label{R4.1}
\begin{enumerate}
\item[(a)] It follows from Remark \ref{R2.2} (b) that the free (resp. tensor) product of finitely many copies of a (resp. commutative)
Hopf-algebra (or bialgebra) has a canonical Hopf-algebra (or bialgebra) structure. For example, the usual bialgebra structures on
$\m{K}[x_1,\cdots,x_n]=\f{M}(\m{K}[x],\m{K}^n)$ and $\m{K}^\r{c}[x_1,\cdots,x_n]=\f{M}^\r{c}(\m{K}[x],\m{K}^n)$ (\cite[Section 3.2]{Sweedler1})
are induced by the usual bialgebra structure on $\m{K}[x]$.
\item[(b)] It has been shown by Wang \cite{Wang2} that free product of finitely many (C*-algebraic) compact quantum groups is a
compact quantum group.
\end{enumerate}
\end{remark}
\section{Quantum group of gauge transformations}
In this short section we apply the construction of the preceding section to obtain a notion for Hopf-algebra of gauge transformations.
We refer the reader to \cite{BrzezinskiMajid1} for basic ideas and notions on gauge theory on noncommutative spaces.
Let $B$ be a Hopf-algebra with comultiplication $\Delta$ and counit $\epsilon$ and let $E$ be a trivial left quantum
vector bundle with fiber $V$ and base $A$ in the sense of \cite{BrzezinskiMajid1}. This means that $A$ is an algebra and $V$ is a left
$B$-comodule algebra with a coaction $\rho$ and $E$ is (isomorphic to) the algebra $V\ot A$. We slightly change the definitions in
\cite{BrzezinskiMajid1} and define $\gamma$ to be a quantum gauge transformation of $E$ if $\gamma$ is a morphism from $B$ to $A$.
Then it is natural to define a gauge filed just to be a morphism from $V$ to $A$. Now, it is clear that if $A$ is commutative and FR
then the Hopf-algebra $C=\f{M}(B,A)$ and the algebra $H=\f{M}(V,A)$, respectively, play the role of \emph{quantum group of gauge transformations}
and \emph{quantum space of gauge fields} in our dual setting. In the classical case, there is a canonical action of gauge transformations
on gauge fields. This is also the case in our setting: Let $\bar{\Delta},\bar{\epsilon}$ and $\phi$ be as in the preceding section
and let $\varphi=\f{m}(V,A)$. Let $\bar{\rho}:H\to C\ot H$ be the unique morphism satisfying
$(\i_A\ot\bar{\rho})\varphi=(\mu_A\ot\i_{C\ot H})F(\phi\ot\varphi)\rho$. We show that $\bar{\rho}$ is a coaction:
\begin{equation*}
\begin{split}
(\i_A\ot[(\bar{\Delta}\ot\i_H)\bar{\rho}])\varphi&=(\i_A\ot\bar{\Delta}\ot\i_H)(\i_A\ot\bar{\rho})\varphi\\
&=(\i_A\ot\bar{\Delta}\ot\i_H)(\mu_A\ot\i_{C\ot H})F(\phi\ot\varphi)\rho\\
&=(\mu_A\ot\i_{C\ot C\ot H})F([(\i_A\ot\bar{\Delta})\phi]\ot\varphi)\rho\\
&=(\mu_A\ot\i_{C\ot C\ot H})F([(\mu_A\ot\i_{C\ot C})F(\phi\ot\phi)\Delta]\ot\varphi)\rho\\
&=(\mu_A\ot\i_{C\ot C\ot H})F(\phi\ot\phi\ot\varphi)(\Delta\ot\i_V)\rho,
\end{split}
\end{equation*}
\begin{equation*}
\begin{split}
(\i_A\ot[(\i_C\ot\bar{\rho})\bar{\rho}])\varphi&=(\i_A\ot\i_C\ot\bar{\rho})(\i_A\ot\bar{\rho})\varphi\\
&=(\i_{A\ot C}\ot\bar{\rho})(\mu_A\ot\i_{C\ot H})F(\phi\ot\varphi)\rho\\
&=(\mu_A\ot\i_{C\ot C\ot H})F(\i_{A\ot C}\ot\i_A\ot\bar{\rho})(\phi\ot\varphi)\rho\\
&=(\mu_A\ot\i_{C\ot C\ot H})F(\phi\ot[(\i_A\ot\bar{\rho})\varphi])\rho\\
&=(\mu_A\ot\i_{C\ot C\ot H})F(\phi\ot[(\mu_A\ot\i_{C\ot H})F(\phi\ot\varphi)\rho])\rho\\
&=(\mu_A\ot\i_{C\ot C\ot H})F(\phi\ot\phi\ot\varphi)(\i_B\ot\rho)\rho.
\end{split}
\end{equation*}
Thus $(\i_A\ot[(\bar{\Delta}\ot\i_H)\bar{\rho}])\varphi=(\i_A\ot[(\i_C\ot\bar{\rho})\bar{\rho}])\varphi$ and it follows that
$(\bar{\Delta}\ot\i_H)\bar{\rho}=(\i_C\ot\bar{\rho})\bar{\rho}$. It remains to prove counit identity $(\bar{\epsilon}\ot\i_H)\bar{\rho}=\i_H$.
It follows from the calculation below.
\begin{equation*}
\begin{split}
(\i_A\ot[(\bar{\epsilon}\ot\i_H)\bar{\rho}])\varphi&=(\i_A\ot\bar{\epsilon}\ot\i_H)(\i_A\ot\bar{\rho})\varphi\\
&=(\i_A\ot\bar{\epsilon}\ot\i_H)(\mu_A\ot\i_{C\ot H})F(\phi\ot\varphi)\rho\\
&=(\mu_A\ot\i_H)F([(\i_A\ot\bar{\epsilon})\phi]\ot\varphi)\rho\\
&=\varphi(\epsilon\ot\i_V)\rho\\
&=\varphi.
\end{split}
\end{equation*}
\section{Pontryagin dual of a FR Hopf-algebra}
In this section we consider, in our dual formalism, a construction analogous to the construction of (semi)group of
homomorphisms from a (semi)group to an abelian (semi)group, and as a consequence we obtain a notion of Pontryagin dual for FR
Hopf-algebras.

Let $(B,\Delta,\epsilon)$ be a cocommutative bialgebra and let $A$ be a FR commutative
algebra together with an arbitrary morphism $\Gamma:A\to A\ot A$.
Suppose that $B$ is free as module. Then, by Theorem \ref{T3.3}, the class $\c{M}_3$ has a universal family,
which we denote by $\psi:B\to A\ot D$. Let $\chi:B\to A\ot(D\ot D)$ be the composition
$(\mu_A\ot\i_{D\ot D})(\i_A\ot F\ot\i_{D})(\psi\ot\psi)\Delta$.
It follows from coassociativity and cocommutativity of $\Delta$ that,
\begin{equation*}
\begin{split}
&(\i_{A\ot A}\ot\mu_{D\ot D})(\i_A\ot F\ot\i_{D\ot D})(\chi\ot\chi)\Delta=\\
&(\mu_{A\ot A}\ot\i_{D\ot D})(\i_{A\ot A}\ot F\ot\i_{D})([(\Gamma\ot\i_{D})\psi]\ot[(\Gamma\ot\i_{D})\psi])\Delta=\\
&(\Gamma\ot\i_{D\ot D})\chi.
\end{split}
\end{equation*}
So, $\chi$ is in $\c{M}_3$. Since $\psi$ is the universal family of $\c{M}_3$, there is a unique morphism $\tilde{\Delta}:D\to D\ot D$ for which
$\chi=(\i_A\ot\tilde{\Delta})\psi$. In a manner analogous to the proofs of Lemma \ref{L5.1}
and Lemma \ref{L5.2}, it is shown that $\tilde{\Delta}$ is a cocommutative comultiplication. It is easily checked that
the family $\epsilon':B\to A\ot\m{K}$ defined by $b\mapsto\epsilon(b)1$ belongs to $\c{M}_3$. So, by the universal property of $D$,
there is a unique character $\tilde{\epsilon}$ on $D$ for which $(\i_A\ot\tilde{\epsilon})\psi=\epsilon'$. A proof analogous to the proof
of Lemma \ref{L5.3} shows that $\tilde{\epsilon}$ is a counit for $\tilde{\Delta}$. Moreover, if  the bialgebra $B$
has an antipode $S$, then appropriately modified versions of Lemma \ref{L2.1} and above proofs show that $D$ has an antipode.
So, we have proved theorem below.
\begin{theorem}\label{T5.3}
Let $B$ be a cocommutative bialgebra with  comultiplication $\Delta$, let $A$ be a FR commutative algebra, and let
$\Gamma:A\to A\ot A$ be a morphism. Suppose that $B$ is free as $\m{K}$-module and let $D$ be the parameter-algebra of the universal
family of the class of families $\psi':B\to A\ot D'$ for which
$(\Gamma\ot\i_{D'})\psi'=(\i_{A\ot A}\ot\mu_{D'})(\i_A\ot F\ot\i_{D'})(\psi'\ot\psi')\Delta$.
Then $D$ has a canonical cocommutative bialgebra structure. Moreover, $D$ is a Hopf-algebra if $B$ is.
\end{theorem}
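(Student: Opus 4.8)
The plan is to follow exactly the blueprint laid out in Section 5 for $\f{M}(B,A)$, which is why the paragraph preceding Theorem \ref{T5.3} already carried out most of the work; the proof merely needs to assemble those pieces and add the antipode verification. First I would recall the setup: $\psi\colon B\to A\ot D$ is the $\c{M}_3$-universal family guaranteed by Theorem \ref{T3.3} (applicable since $B$ is free and $A$ is FR), and $\tilde\Delta\colon D\to D\ot D$ is the unique morphism with $(\i_A\ot\tilde\Delta)\psi=\chi$, where $\chi=(\mu_A\ot\i_{D\ot D})(\i_A\ot F\ot\i_D)(\psi\ot\psi)\Delta$. The computation displayed before the theorem shows $\chi\in\c{M}_3$, so $\tilde\Delta$ exists. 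Coassociativity of $\tilde\Delta$ is then proved exactly as in Lemma \ref{L5.1}: one pushes both $(\i_A\ot\i_D\ot\tilde\Delta)(\i_A\ot\tilde\Delta)\psi$ and $(\i_A\ot\tilde\Delta\ot\i_D)(\i_A\ot\tilde\Delta)\psi$ through the defining relation, reduces each to a common expression in $(\psi\ot\psi\ot\psi)$ composed with flips and $\mu_A$'s, and invokes $(\i_B\ot\Delta)\Delta=(\Delta\ot\i_B)\Delta$; universality of $D$ then yields $(\i_D\ot\tilde\Delta)\tilde\Delta=(\tilde\Delta\ot\i_D)\tilde\Delta$. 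Cocommutativity ($F\tilde\Delta=\tilde\Delta$) follows from cocommutativity of $\Delta$ together with commutativity of $A$, as in Lemma \ref{L5.2}.

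Next I would handle the counit. The family $\epsilon'\colon B\to A\ot\m{K}$, $b\mapsto\epsilon(b)1$, lies in $\c{M}_3$ — this is a one-line check using $\mu_\m{K}=\i_\m{K}$ and $(\epsilon\ot\epsilon)\Delta=\epsilon$ (a consequence of the counit axiom) against the $\c{M}_3$-condition with parameter algebra $\m{K}$ — so there is a unique character $\tilde\epsilon$ on $D$ with $(\i_A\ot\tilde\epsilon)\psi=\epsilon'$. That $\tilde\epsilon$ is a left (hence, by cocommutativity, two-sided) counit for $\tilde\Delta$ is proved mutatis mutandis as in Lemma \ref{L5.3}: apply $(\i_A\ot(\tilde\epsilon\ot\i_D)\tilde\Delta)$ to $\psi$, expand $\tilde\Delta$ via its definition, slide $\tilde\epsilon$ through the flip, use $(\i_A\ot\tilde\epsilon)\psi=\epsilon'$ to collapse one tensor factor, and finish with the counit identity $(\epsilon\ot\i_B)\Delta=\i_B$ for $B$; universality of $D$ then gives $(\tilde\epsilon\ot\i_D)\tilde\Delta=\i_D$. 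This establishes that $(D,\tilde\Delta,\tilde\epsilon)$ is a cocommutative bialgebra.

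For the Hopf-algebra claim, suppose $B$ has antipode $S\colon B\to B^{\r{op}}$. As in Section 5 one wants $\tilde S=\text{(something like) }\f{M}(S)$, but here the relevant universal object is the parameter-algebra of $\c{M}_3$, not of all of $\c{M}(B,A)$, so I would first note the appropriately modified version of Lemma \ref{L2.1}: if $\psi\colon B\to A\ot D$ is $\c{M}_3$-universal then the induced family $B^{\r{op}}\to A\ot D^{\r{op}}$ is universal for the corresponding class over $B^{\r{op}}$ (this uses that $A$ is commutative, so $F$-conjugation behaves well, and that the $\c{M}_3$-relation is stable under passing to opposite algebras together with $S$). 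Composing the universal family of $\c{M}_3$ over $B$ with the morphism induced by $S$ produces a morphism $\tilde S\colon D\to D^{\r{op}}$. That $\tilde S$ is an antipode is then the same calculation as the displayed antipode verification for $\bar S$ in Section 5: evaluate $\mu_D(\i_D\ot\tilde S)\tilde\Delta$ on a generator $c=(f\ot\i_D)\psi(b)$ (such $c$ generate $D$ by the $\c{M}_3$-analogue of Proposition \ref{P2.1}(a)), expand $\tilde\Delta$, pull $\tilde S$ through the flips so that it acts as $\phi S$ on the right tensor leg, recombine the two copies of $\psi$ using multiplicativity of $\psi$, and apply the antipode axiom $\mu_B(\i_B\ot S)\Delta=\epsilon1_B$ of $B$; the result is $\tilde\epsilon(c)1_D$, and symmetrically for $\mu_D(\tilde S\ot\i_D)\tilde\Delta$. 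The main obstacle — and the only genuinely new point relative to Section 5 — is verifying that the modified Lemma \ref{L2.1} actually applies to the restricted universal family $\psi$ rather than to $\f{m}(B,A)$: one must check that the $\c{M}_3$-defining relation, rewritten for $B^{\r{op}}$ and $D^{\r{op}}$, is matched under $S$ with the original relation, so that $S$ really does induce a morphism on the $\c{M}_3$-parameter-algebras. Once that bookkeeping is done, everything else is a transcription of the Section 5 arguments, so I would state it as such and omit the routine diagram chases for brevity.
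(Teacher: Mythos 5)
Your proposal is correct and follows the paper's own proof essentially step for step: existence of the $\c{M}_3$-universal family via Theorem \ref{T3.3}, the check that $\chi$ lies in $\c{M}_3$ so that $\tilde\Delta$ exists, analogues of Lemmas \ref{L5.1}--\ref{L5.3} for coassociativity, cocommutativity and the counit, and a modified Lemma \ref{L2.1} combined with the Section 5 antipode computation for the Hopf case. The one point you single out as needing verification---that $S$ is compatible with the $\c{M}_3$-defining relation (using cocommutativity of $\Delta$ and the passage to opposite algebras) so that it really induces $\tilde S$ on the restricted parameter-algebra $D$---is precisely the step the paper itself leaves at the level of an ``appropriately modified'' Lemma \ref{L2.1}, so your treatment matches the paper's in both route and level of detail.
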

A similar result is also satisfied for the universal family of $\c{M}^\r{c}_3$.

We want now offer a notion for \emph{Pontryagin dual} of a FR commutative Hopf-algebra.
For clarity of the discussion, we first recall below some standard facts.
Given a FR Hopf-algebra $A$ there corresponds another FR Hopf-algebra which we
call \emph{algebraic dual} of $A$ and denote by $A^*$.
The underlying module of $A^*$ is $\mathrm{Hom}_\m{K}(A,\m{K})$, the set of all module homomorphisms from $A$ to $\m{K}$. (Co)multiplication
of $A^*$ is induces by that of $A$ via canonical isomorphism
$\mathrm{Hom}_\m{K}(A\ot A,\m{K})\simeq\mathrm{Hom}_\m{K}(A,\m{K})\ot\mathrm{Hom}_\m{K}(A,\m{K})$. (Co)unit and antipode are induced
by the usual duality between $A$ and $\mathrm{Hom}_\m{K}(A,\m{K})$. Note that $A^{**}\simeq A$ as Hopf-algebras.
For a finite group $G$ the group algebra $\m{K}G$ (with convolution multiplication) is also a FR Hopf-algebra with
comultiplication, counit and antipode respectively given by $1_g\mapsto 1_g\ot 1_g$, $1_g\mapsto 1$ and $1_g\mapsto 1_{g^{-1}}$,
where $1_g:G\to\m{K}$ is defined by $1_g(g)=1$ and $1_g(g')=0$ for $g\neq g'$. Then the algebraic dual of $\m{K}G$ is canonically
isomorphic to the function algebra on $G$, denoted by $\m{K}(G)$, with pointwise multiplication and with comultiplication, counit and antipode
respectively given by $1_g\mapsto\sum_{h\in G}1_h\ot 1_{h^{-1}g}$, $1_g\mapsto 1_g(e)$ and $1_g\mapsto 1_{g^{-1}}$.

Recall that, in classical Harmonic Analysis, to any compact (resp. discrete) abelian group $G$ there corresponds its pontryagin dual
$\hat{G}$ which is a discrete (resp. compact) abelian group. $\hat{G}$ is the pointwise multiplication group of all group
homomorphisms from $G$ to the multiplicative group of complex numbers of absolute value 1. Similar to algebraic dual of Hopf-algebras
we have Pontryagin's duality: $G\simeq\hat{\hat{G}}$. It is not hard to see that if $G$ is finite
then $G\simeq\hat{G}$, $\m{C}G\simeq\m{C}(\hat{G})$ and $\m{C}(G)\simeq\m{C}\hat{G}$ (\cite{Majid1}).
Also note that in this case any group homomorphisms from $G$ to $\m{C}-\{0\}$ take values automatically on the unit circle.

A purely algebraic analog of the group $\m{C}-\{0\}$ is a Hopf-algebra $(\m{K}^\dag,\Delta,\epsilon,S)$ that represents the affine
group scheme of invertible elements of $\m{K}$-algebras (\cite{Milne1}). The underlying algebra of $\m{K}^\dag$ is $\m{K}^\r{c}[x,y]/(xy-1)$ and
the Hopf-algebra operations are defined by $\Delta(x)=x\ot x$, $\Delta(y)=y\ot y$, $\epsilon(x)=\epsilon(y)=1$, $S(x)=y$ and $S(y)=x$.
Note that $\m{K}^\dag$ is a free module.

Now we are in a position to introduce the notion of Pontryagin dual:
Let $A$ be a FR commutative Hopf-algebra. Denote by $\f{p}:\m{K}^\dag\to A\ot\f{P}(A)$ the universal family defined by
Theorem \ref{T5.3} with $B=\m{K}^\dag$. The corresponding universal family with commutative parameter-algebra is denoted by
$\f{p}^\r{c}:\m{K}^\dag\to A\ot\f{P}^\r{c}(A)$. Then, it is reasonable to call both Hopf-algebras $\f{P}(A)$ and $\f{P}^\r{c}(A)$
the \emph{Pontryagin dual} of $A$.

It is not clear to the author that in general how $A^*$ and $\f{P}(A)$ are related. But we have the following observation
in the special case of finite groups. Let $G$ be a finite abelian group and suppose that $\m{K}$ is a field (or more generally,
a ring for which any equation $z^n=1$ has only finitely many solutions where $n$ is any divisor of $|G|$). Then the multiplicative
group $\hat{G}_\m{K}$ of all group homomorphisms from $G$ to the group of units of $\m{K}$ is finite. Let $A=\m{K}(G)$ and let the family
$\psi:\m{K}^\dag\to A\ot\m{K}(\hat{G}_\m{K})$ be defined by $\psi(x)=\sum\alpha(g)1_g\ot 1_\alpha$
and $\psi(y)=\sum\alpha(g)^{-1}1_g\ot 1_\alpha$ where the sums are taken over all $g\in G$ and $\alpha\in\hat{G}_\m{K}$.
It is easily checked that $\psi$ satisfies condition of Theorem \ref{T5.3} and so there is a unique morphism
$\varphi:\f{P}^\r{c}(A)\to\m{K}(\hat{G}_\m{K})$ for which $\psi=(\i_{\m{K}(G)}\ot\varphi)\f{p}^\r{c}$. Now if $\m{K}$
is an algebraically closed field then a deduction analogous to Remark \ref{R2.2} (e) shows that $\f{P}^\r{c}(A)/J\simeq\m{K}(\hat{G}_\m{K})$
and so if $\m{K}=\m{C}$ then $\f{P}^\r{c}(A)/J\simeq A^*$ as Hopf-algebras.
\section{Hopf-algebra structure of the family of all automorphisms}
In this section we consider a canonical bialgebra  structure on the parameter-algebra of the family of all endomorphisms
of a FR algebra. This construction is analogous to the semigroup of self-maps on an ordinary space.

Let $A$ be a FR algebra and let $C=\f{M}(A,A)$ and $\phi=\f{m}(A,A)$.
Let $\Gamma:C\to C\ot C$ be the unique morphism satisfying $(\i_A\ot\Gamma)\phi=\phi\circ\phi$. Then,
\begin{equation*}
\begin{split}
(\i_A\ot[(\Gamma\ot\i_{C})\Gamma])\phi&=(\i_A\ot\Gamma\ot\i_{C})(\i_A\ot\Gamma)\phi\\
&=(\i_A\ot\Gamma\ot\i_{C})(\phi\circ\phi)\\
&=(\i_A\ot\Gamma\ot\i_{C})(\phi\ot\i_{C})\phi\\
&=([(\i_A\ot\Gamma)\phi]\ot\i_{C})\phi\\
&=([(\phi\ot\i_{C})\phi]\ot\i_{C})\phi\\
&=(\phi\ot\i_{C\ot C})(\phi\ot\i_{C})\phi,
\end{split}
\end{equation*}
and also,
\begin{equation*}
\begin{split}
(\i_A\ot[(\i_{C}\ot\Gamma)\Gamma])\phi&=(\i_A\ot\i_{C}\ot\Gamma)(\i_A\ot\Gamma)\phi\\
&=(\i_A\ot\i_{C}\ot\Gamma)(\phi\ot\i_{C})\phi\\
&=(\phi\ot\Gamma)\phi\\
&=(\phi\ot\i_{C\ot C})(\i_A\ot\Gamma)\phi\\
&=(\phi\ot\i_{C\ot C})(\phi\ot\i_{C})\phi.
\end{split}
\end{equation*}
So, $(\i_A\ot[(\Gamma\ot\i_{C})\Gamma])\phi=(\i_A\ot[(\i_{C}\ot\Gamma)\Gamma])\phi$. This identity together with the universal
property of $C$ imply $(\Gamma\ot\i_{C})\Gamma=(\i_{C}\ot\Gamma)\Gamma$. Thus, $\Gamma$ is a comultiplication.
(One may also apply the method of the proof of \cite[Theorem 4.1]{Soltan1} using Proposition \ref{P2.1} (a).)
Now, let $\epsilon:C\to\m{K}$ be the unique character satisfying $(\i_A\ot\epsilon)\phi=\i_A\ot1$. Then,
\begin{equation*}
\begin{split}
(\i_A\ot[(\i_{C}\ot\epsilon)\Gamma])\phi&=(\i_A\ot\i_{C}\ot\epsilon)(\i_A\ot\Gamma)\phi\\
&=(\i_A\ot\i_{C}\ot\epsilon)(\phi\ot\i_{C})\phi\\
&=(\phi\ot\i_\m{K})(\i_A\ot\epsilon)\phi\\
&=\phi\ot1\\
&=(\i_A\ot[\i_{C}\ot1])\phi.
\end{split}
\end{equation*}
This equality together with the universal property of $C$ imply $(\i_{C}\ot\epsilon)\Gamma=\i_{C}\ot1$.
Analogously, $(\epsilon\ot\i_{C})\Gamma=\i_{C}\ot1$. So, $\epsilon$ is a counit for $\Gamma$ and $(C,\Gamma,\epsilon)$
is a bialgebra. Also, it is easily seen that $A$ is a $C$-comodule via $\phi$.
We now show that $\phi$ is a universal coaction among all coactions of bialgebras on $A$.
Let $D$ be a bialgebra with a comultiplication $\Theta$, and suppose that we are given a coaction $\varphi:A\to A\ot D$ of $D$ on $A$.
Let $\psi:C\to D$ be the unique morphism satisfying $(\i_A\ot\psi)\phi=\varphi$. Then,
\begin{equation*}
\begin{split}
(\i_A\ot[(\psi\ot\psi)\Gamma])\phi&=(\i_A\ot\psi\ot\psi)(\i_A\ot\Gamma)\phi\\
&=(\i_A\ot\psi\ot\psi)(\phi\ot\i_{C})\phi\\
&=([(\i_A\ot\psi)\phi]\ot\psi)\phi\\
&=(\varphi\ot\psi)\phi\\
&=(\varphi\ot\i_D)(\i_A\ot\psi)\phi\\
&=(\varphi\ot\i_D)\varphi\\
&=(\i_A\ot\Theta)\varphi\\
&=(\i_A\ot\Theta)(\i_A\ot\psi)\phi\\
&=(\i_A\ot\Theta\psi)\phi
\end{split}
\end{equation*}
The above identity together with the universal property of $C$ imply $\Theta\psi=(\psi\ot\psi)\Gamma$, that means $\psi$ is
a bialgebra morphism. So, we have proved the theorem below.
\begin{theorem}\label{T6.1}
Let $A$ be a FR algebra. Then, $\f{M}(A,A)$ is a bialgebra in a canonical way and $A$ is an $\f{M}(A,A)$-comodule via $\f{m}(A,A)$.
Moreover, $\f{m}(A,A)$ is a universal coaction in the following sense:
For any bialgebra $D$ and any coaction $\varphi:A\to A\ot D$ of $D$ on $A$, the unique morphism $\psi:\f{M}(A,A)\to D$, satisfying
$(\i_A\ot\psi)\f{m}(A,A)=\varphi$, is also a bialgebra morphism and is compatible with the comodule structures on $A$.
\end{theorem}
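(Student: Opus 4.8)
The plan is to transport every piece of the bialgebra structure onto $C:=\f{M}(A,A)$ through the single universal property of $\phi:=\f{m}(A,A):A\to A\ot C$, using constantly the principle (Proposition \ref{P2.1}(c)) that a morphism out of $C$ is completely determined by its composition with $\phi$; equivalently, one tests an identity between two morphisms $C\to C'$ by checking it on the algebra generators $(\alpha\ot\i_C)\phi(a)$ of $C$ supplied by Proposition \ref{P2.1}(a), which is the device used in So{\l}tan's C*-algebraic treatment \cite{Soltan1}.

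First I would construct the comultiplication. By Definition \ref{D2.1}(d) the composition $\phi\circ\phi=(\phi\ot\i_C)\phi:A\to A\ot(C\ot C)$ is again a family of morphisms from $A$ to $A$, so universality produces a unique morphism $\Gamma:C\to C\ot C$ with $(\i_A\ot\Gamma)\phi=\phi\circ\phi$. To get coassociativity I would expand $(\i_A\ot[(\Gamma\ot\i_C)\Gamma])\phi$ and $(\i_A\ot[(\i_C\ot\Gamma)\Gamma])\phi$; substituting the defining relation of $\Gamma$ twice in each, both reduce to the triple composite $(\phi\ot\i_{C\ot C})(\phi\ot\i_C)\phi$, and the uniqueness clause of Proposition \ref{P2.1}(c) then forces $(\Gamma\ot\i_C)\Gamma=(\i_C\ot\Gamma)\Gamma$. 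For the counit, $\i_A$ viewed as the trivial family $A\to A\ot\m{K}$ corresponds, again by Proposition \ref{P2.1}(c), to a unique character $\epsilon:C\to\m{K}$ with $(\i_A\ot\epsilon)\phi=\i_A\ot1$; applying $\i_A\ot\i_C\ot\epsilon$ and $\i_A\ot\epsilon\ot\i_C$ to the defining relation of $\Gamma$ and invoking uniqueness once more gives $(\i_C\ot\epsilon)\Gamma=\i_C\ot1=(\epsilon\ot\i_C)\Gamma$. Hence $(C,\Gamma,\epsilon)$ is a bialgebra.

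The comodule assertion is then almost a tautology: the coaction axiom $(\phi\ot\i_C)\phi=(\i_A\ot\Gamma)\phi$ is exactly the definition of $\Gamma$, and $(\i_A\ot\epsilon)\phi=\i_A$ is exactly the definition of $\epsilon$, so $\phi$ is a right $C$-coaction on the algebra $A$. For universality of this coaction, let $(D,\Theta,\epsilon_D)$ be a bialgebra and $\varphi:A\to A\ot D$ a coaction; since $\varphi$ is in particular a family of morphisms from $A$ to $A$, universality of $\phi$ yields a unique morphism $\psi:C\to D$ with $(\i_A\ot\psi)\phi=\varphi$, which is the asserted compatibility with the comodule structures. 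To check $\psi$ is a coalgebra morphism I would compute $(\i_A\ot[(\psi\ot\psi)\Gamma])\phi$, which — after substituting the defining relation of $\Gamma$ and $(\i_A\ot\psi)\phi=\varphi$ — becomes $(\varphi\ot\i_D)\varphi$, and $(\i_A\ot\Theta\psi)\phi=(\i_A\ot\Theta)\varphi$; these agree by the coassociativity axiom of the coaction $\varphi$, so by uniqueness $\Theta\psi=(\psi\ot\psi)\Gamma$. Lastly $(\i_A\ot\epsilon_D\psi)\phi=(\i_A\ot\epsilon_D)\varphi=\i_A=(\i_A\ot\epsilon)\phi$ forces $\epsilon_D\psi=\epsilon$, so $\psi$ is a bialgebra morphism.

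There is no conceptual obstacle: each identity is rigidly dictated by the one universal property of $\f{M}(A,A)$, and the only labor is keeping track of the tensor slots when expanding $\phi\circ\phi$ and the composite $(\varphi\ot\psi)\phi$. The longest individual chases are coassociativity of $\Gamma$ and the step showing $\psi$ respects comultiplications; note that, in contrast to Theorem \ref{T5.1}, no antipode is claimed here, so Lemma \ref{L2.1} and the $(\,\cdot\,)^{\r{op}}$-identifications play no role.
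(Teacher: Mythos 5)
Your proposal is correct and follows essentially the same route as the paper: define $\Gamma$ by $(\i_A\ot\Gamma)\phi=\phi\circ\phi$ and $\epsilon$ by $(\i_A\ot\epsilon)\phi=\i_A\ot1$, verify coassociativity and the counit identities by reducing both sides against $\phi$ and invoking uniqueness, and prove universality by the same diagram chase showing $\Theta\psi=(\psi\ot\psi)\Gamma$ from the coaction axiom for $\varphi$. The only (harmless) difference is that you also record the counit compatibility $\epsilon_D\psi=\epsilon$ explicitly, which the paper leaves implicit.
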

Analogue of this result is also satisfied for $\f{M}^\r{c}(A,A)$.

Now, we consider the family of isomorphisms.
Let $G=\f{I}(A,A)$, and $\xi=\f{i}(A,A)$. Since $\xi\circ\xi:A\to A\ot(G\ot G)$ and $\i_A\ot1:A\to A\ot\m{K}$
are families of isomorphisms from $A$ onto $A$, there are unique morphisms $\Lambda:G\to G\ot G$ and $\delta:G\to\m{K}$
which satisfy $(\i_A\ot\Lambda)\xi=\xi\circ\xi$ and $(\i_A\ot\delta)\xi=\i_A\ot1$.
A proof analogous to the above proof shows that,
$$(\i_A\ot[(\i_{G}\ot\Lambda)\Lambda])\xi=(\xi\ot\i_{G\ot G})(\xi\ot\i_{G})\xi=(\i_A\ot[(\i_{G}\ot\Lambda)\Lambda])\xi.$$
Since the middle term of the above equality is an invertible family, the universal property of $G$ shows that
$(\Lambda\ot\i_{G})\Lambda=(\i_{G}\ot\Lambda)\Lambda$. Also, we may similarly conclude,
$(\delta\ot\i_{G})\Lambda=\i_{G}\ot1$. So, we have proved that $(G,\Lambda,\delta)$ is a bialgebra.
Analogously, if we use $G^\r{c}=\f{I}^\r{c}(A,A)$ and $\xi^\r{c}=\f{i}^\r{c}(A,A)$ instead of $G$ and $\xi$ above,
we find a bialgebra $(G^\r{c},\Lambda^\r{c},\delta^\r{c})$ which is also a quotient of $G$.
Actually, $G^\r{c}$ is a Hopf-algebra algebra: Let $S:G^\r{c}\to G^\r{c}$ be the unique morphism satisfying
$(\xi^\r{c})^{-1}=(\i_A\ot S)\xi^\r{c}$. Then,
\begin{equation*}
\begin{split}
(\i_A\ot[\mu_{G^\r{c}}(S\ot\i_{G^\r{c}})\Lambda^\r{c}])\xi^\r{c}&=
(\i_A\ot\mu_{G^\r{c}})(\i_A\ot S\ot\i_{G^\r{c}})(\i_A\ot\Lambda^\r{c})\xi^\r{c}\\
&=(\i_A\ot\mu_{G^\r{c}})(\i_A\ot S\ot\i_{G^\r{c}})(\xi^\r{c}\circ\xi^\r{c})\\
&=(\i_A\ot\mu_{G^\r{c}})(\i_A\ot S\ot\i_{G^\r{c}})(\xi^\r{c}\ot\i_{G^\r{c}})\xi^\r{c}\\
&=(\i_A\ot\mu_{G^\r{c}})((\xi^\r{c})^{-1}\ot\i_{G^\r{c}})\xi^\r{c}\\
&=\i_A\ot1\\
&=(\i_A\ot\delta^\r{c}1)\xi^\r{c}.
\end{split}
\end{equation*}
Since $\i_A\ot1:A\to A\ot G^\r{c}$ is an invertible family, the above identity together with the universal property of $G^\r{c}$
show that $\mu_{G^\r{c}}(S\ot\i_{G^\r{c}})\Lambda^\r{c}=\delta^\r{c}$. Similarly it is proved,
$\mu_{G^\r{c}}(\i_{G^\r{c}}\ot S)\Lambda^\r{c}=\delta^\r{c}$. So, $S$ is an antipode and $(G^\r{c},\Lambda^\r{c},\delta^\r{c},S)$
is a Hopf-algebra. Also, note that $A$ is a $G^\r{c}$-comodule via the coaction $\xi^\r{c}$. If $D$ is another commutative Hopf-algebra
with a coaction, $\varphi:A\to A\ot D$, on $A$, then Example \ref{E3.1} (b) shows that $\varphi$ is an invertible family and so
there is a canonical morphism from $G^\r{c}$ to $D$ which is, analogous to above, a Hopf-algebra morphism. So, we have proved
the next theorem.
\begin{theorem}\label{T8.2}
Let $A$ be a FR algebra. Then, $\f{I}^\r{c}(A,A)$ has a canonical Hopf-algebra structure and $A$ is a $\f{I}^\r{c}(A,A)$-comodule
via $\f{i}^\r{c}(A,A)$. Moreover, $\f{i}^\r{c}(A,A)$ is a universal coaction in the following sense:
If $D$ is a commutative Hopf-algebra with a coaction $\varphi:A\to A\ot D$ on $A$, then the unique morphism $\psi:\f{I}^\r{c}(A,A)\to D$,
satisfying $\varphi=(\i_A\ot\psi)\f{i}^\r{c}(A,A)$, is also a Hopf-algebra morphism and is compatible with the coactions on $A$.
\end{theorem}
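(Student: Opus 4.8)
The plan is to transport the composition law of invertible families onto $G:=\f{I}^\r{c}(A,A)$ through the universal property of Proposition \ref{P2.3}, in exact parallel with the treatment of $\f{M}(A,A)$ in Theorem \ref{T6.1}. The one genuinely new bookkeeping point is that $\f{i}^\r{c}(A,A)$ is only \emph{initial in} $\c{I}^\r{c}(A,A)$, not in all of $\c{M}(A,A)$, so before applying its universal property one must always exhibit the family in question as an invertible family over a commutative parameter-algebra.

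First I would put $\xi:=\f{i}^\r{c}(A,A):A\to A\ot G$. The family $\xi\circ\xi\circ\xi$ is invertible as a composition of invertible families, and $\i_A\ot1:A\to A\ot G$ is invertible by Example \ref{E3.1}(a); hence the universal property yields unique morphisms $\Lambda:G\to G\ot G$ and $\delta:G\to\m{K}$ with $(\i_A\ot\Lambda)\xi=\xi\circ\xi$ and $(\i_A\ot\delta)\xi=\i_A\ot1$. Repeating the tensor-flip computation from the proof of Theorem \ref{T6.1}, one checks that $(\i_A\ot[(\Lambda\ot\i_G)\Lambda])\xi$ and $(\i_A\ot[(\i_G\ot\Lambda)\Lambda])\xi$ are both equal to $(\xi\ot\i_{G\ot G})(\xi\ot\i_G)\xi=\xi\circ\xi\circ\xi$; since this common value is an invertible family, uniqueness in the universal property of $G$ forces $(\Lambda\ot\i_G)\Lambda=(\i_G\ot\Lambda)\Lambda$. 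Likewise $(\delta\ot\i_G)\Lambda$ and $(\i_G\ot\delta)\Lambda$ both pull back along $\xi$ to $\i_A\ot1$, which is invertible, so $(\delta\ot\i_G)\Lambda=\i_G\ot1=(\i_G\ot\delta)\Lambda$. Thus $(G,\Lambda,\delta)$ is a bialgebra and $\xi$ makes $A$ a $G$-comodule.

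Next I would produce the antipode. Because $G=\f{I}^\r{c}(A,A)$ is commutative, $\xi^{-1}:A\to A\ot G$ equals $(\i_A\ot S)\xi$ for a unique algebra morphism $S:G\to G$ (commutativity is essential here, so that $S$ lands in $G$ rather than in $G^\r{op}$). Feeding $\mu_G(S\ot\i_G)\Lambda$ into the universal property and using $(\i_A\ot\Lambda)\xi=\xi\circ\xi$ together with the defining identity $(\i_A\ot\mu_G)(\xi^{-1}\circ\xi)=\i_A\ot1$ of Definition \ref{D2.3}, one obtains $(\i_A\ot[\mu_G(S\ot\i_G)\Lambda])\xi=\i_A\ot1=(\i_A\ot\delta1)\xi$, and symmetrically with $S$ in the second slot; invertibility of $\i_A\ot1$ and uniqueness then give $\mu_G(S\ot\i_G)\Lambda=\delta1=\mu_G(\i_G\ot S)\Lambda$. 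Hence $(G,\Lambda,\delta,S)$ is a Hopf-algebra.

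For universality, let $D$ be a commutative Hopf-algebra with comultiplication $\Theta$ and counit $\epsilon_D$, and let $\varphi:A\to A\ot D$ be a coaction. By Example \ref{E3.1}(b), applied with $V=A$ and $B=D$, the coaction $\varphi$ is an invertible family, so $\varphi\in\c{I}^\r{c}(A,A)$ and the universal property of $\xi$ supplies a unique morphism $\psi:G\to D$ with $\varphi=(\i_A\ot\psi)\xi$, which is already the asserted compatibility with the comodule structures. To see $\psi$ is a bialgebra morphism I would copy the proof of Theorem \ref{T6.1}: expanding $(\i_A\ot[(\psi\ot\psi)\Lambda])\xi$ via $(\i_A\ot\Lambda)\xi=\xi\circ\xi$ and $(\i_A\ot\psi)\xi=\varphi$ brings it to $(\varphi\ot\i_D)\varphi=(\i_A\ot\Theta)\varphi=(\i_A\ot[\Theta\psi])\xi$, and invertibility of $\xi$ with uniqueness then gives $(\psi\ot\psi)\Lambda=\Theta\psi$; a parallel, easier computation gives $\epsilon_D\psi=\delta$. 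A bialgebra map between Hopf-algebras automatically commutes with the antipodes, so $\psi$ is a Hopf-algebra morphism. The only real obstacle I anticipate is precisely this recurring membership check --- legitimizing each appeal to the universal property of $\f{I}^\r{c}(A,A)$ by displaying the relevant family as an invertible family over a commutative algebra --- for which the closure of invertible families under composition, Example \ref{E3.1}, and the commutativity of $\f{I}^\r{c}(A,A)$ are exactly the tools needed; the underlying tensor-flip manipulations are the same as those already performed for $\f{M}(A,A)$.
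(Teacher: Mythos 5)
Your proposal is correct and is essentially the paper's own argument: $\Lambda$ and $\delta$ are induced from the invertible families $\xi\circ\xi$ and $\i_A\ot 1$, the bialgebra, antipode and universality identities are verified by comparing two morphisms out of $\f{I}^\r{c}(A,A)$ whose pullbacks along $\xi$ agree on a common \emph{invertible} family, $S$ is obtained from $\xi^{-1}$ using commutativity, and Example \ref{E3.1}(b) legitimizes the universality step exactly as in the paper, which differs only in first building the bialgebra on $\f{I}(A,A)$ and then passing to the commutative quotient. One harmless slip: in the counit check the common pullback along $\xi$ is $\xi\ot 1$ (that is, $\xi$ up to the identification $G\ot\m{K}\cong G$), not $\i_A\ot 1$; since $\xi$ is itself an invertible family the uniqueness argument goes through unchanged.
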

At the end of this note, we consider a notion dual to the notion of Galois group. Let $A$ be a FR algebra and let $B$ be a subalgebra of $A$.
The results of Section 4 (see (1), Theorem \ref{T3.1} and Theorem \ref{T3.5} in the case $i=1$) shows that there is
commutative algebra $\f{G}$ together with a morphism $\f{g}:A\to A\ot \f{G}$, satisfying $\f{g}(b)=b\ot1$
for every $b\in B$, and with the following universal property: For every commutative algebra $D$ and any morphism $\varphi:A\to A\ot D$,
satisfying $\varphi(b)=b\ot 1$ for every $b\in B$, there is a unique morphism $\psi:\f{G}\to D$ such that $\varphi=(\i_A\ot\psi)\f{g}$.
A proof, analogous to the proof of Theorem \ref{T8.2}, shows that $\f{G}$ has a canonical Hopf-algebra structure and also $A$
is a $\f{G}$-comodule via $\f{g}$. Moreover, if in the above mentioned universal property, $D$ is a Hopf-algebra and $\varphi$ is a coaction
then $\psi$ is a Hopf-algebra morphism. A result analogous to the second part of Theorem \ref{T3.1} shows that there is a one-to-one
correspondence between characters on $\f{G}$ and algebra automorphisms of $A$ which preserve $B$. Indeed, the usual Galois group $\r{Gal}(A/B)$
of the algebra extension $A/B$ is isomorphic to the character group of the Hopf-algebra $\f{G}$.
Also, an expression, analogous to Remark \ref{R2.2} (e), shows that if $\m{K}$ is an algebraically closed field then $\f{G}/J$ is isomorphic
to $\m{K}(\r{Gal}(A/B))$ as Hopf-algebras, where $J$ denotes the Jacobson radical. Summarizing, it is reasonable to call $\f{G}=\f{Gal}(A/B)$
Galois-Hopf-algebra of the algebra extension $A/B$.

\bibliographystyle{amsplain}

\begin{thebibliography}{10}
\bibitem{Banica1} T. Banica, \emph{Quantum automorphism groups of small metric spaces}, 
Pacific J. Math. \textbf{219} (2005) 27--51. (arXiv:math/0304025)
\bibitem{BanicaBichonCollins1} T. Banica, J. Bichon and B. Collins, \emph{Quantum permutation groups: a survey},
Banach Center Publ. \textbf{78} (2007) 13–-34. (arXiv:math/0612724)
\bibitem{Baues1} H.J. Baues, \emph{Algebraic homotopy}, Cambridge University Press, Cambridge, New York, 1989.
\bibitem{BrzezinskiMajid1} T. Brzezi\'{n}ski and S. Majid, \emph{Quantum group gauge theory on quantum spaces},
Comm. Math. Phys. \textbf{157} (1993) 591–-638. (arXiv:hep-th/9208007)
\bibitem{HoveyPalmieriStrickland1} M. Hovey, J.H. Palmieri and N.P. Strickland, \emph{Axiomatic stable homotopy theory},
Mem. Amer. Math. Soc. \textbf{128} (610) (1997) p. x+114.
\bibitem{Gersten1} S.M. Gersten, \emph{Homotopy theory of rings}, J. Algebra \textbf{19} (1971) 396--415.
\bibitem{Jardine1} J.F. Jardine, \emph{Algebraic homotopy theory}, Can. J. Math. \textbf{2} (1981) 302--319.
\bibitem{Lam1} T.Y. Lam, \emph{Lectures on modules and rings}, Graduate Texts in Math., Vol. 189, Springer-Verlag, 1999.
\bibitem{Majid1} S. Majid, \emph{Foundations of quantum group theory}, Cambridge Univeristy Press, 1995.
\bibitem{May1} J.P. May, \emph{Picard groups, Grothendieck rings, and Burnside rings of categories}, Adv. in math. \textbf{163} (2001) 1--16.
\bibitem{Milne1} J.S. Milne, \emph{Basic theory of affine group schemes}, 2012.
\bibitem{Podles1} P. Podle\'{s}, \emph{Przestrzenie kwantowe i ich grupy symetrii (Quantum spaces and their symmetry groups)},
PhD Thesis, Department of Mathematical Methods in Physics, Faculty of Physics, Warsaw University (1989) (in Polish).
\bibitem{Sadr1} M.M. Sadr, \emph{A kind of compact quantum semigroups}, arXiv:0808.2740 (2010).
\bibitem{SkalskiSoltan1} A. Skalski, P.M. So{\l}tan, \emph{Quantum families of invertible maps and related problems},
arxiv.org:1503.05954 (Preprint).
\bibitem{Soltan1} P.M. So{\l}tan, \emph{Quantum families of maps and quantum semigroups on finite quantum spaces},
J. Geom. Phys. \textbf{59} (2009) 354--368. (arXiv:math/0610922)
\bibitem{Soltan3} P.M. So{\l}tan, \emph{On quantum maps into quantum semigroups}, 
Houston J. Math. \textbf{40} No. 3 (2014) 779--790. (arXiv:1010.3379)
\bibitem{Soltan2} P.M. So{\l}tan, \emph{Quantum $SO(3)$ group and quantum group actions on $M_2$},
J. Noncommutative Geometry \textbf{4} (1) (2010) 1--28. (arXiv:0810.0398)
\bibitem{Sweedler1} M.E. Sweedler, \emph{Hopf algebras}, Benjamin, INC. New York, 1969.
\bibitem{Wang2} S. Wang, \emph{Free products of compact quantum groups}, Comm. Math. Phys. \textbf{167} (1995) 671--692.
\bibitem{Wang1} S. Wang, \emph{Quantum symmetry groups of fnite spaces}, Comm. Math. Phys. \textbf{195} (1998) 195--211. (arXiv:math/9807091)
\bibitem{Woronowicz1} S.L. Woronowicz, \emph{Pseudogroups, pseudospaces and Pontryagin duality},  Proceedings of the
International Conference on Mathematical Physics, Lausanne 1979 , Lecture Notes in Physics \textbf{116}, 407--412.
\end{thebibliography}

\end{document}